\newtheorem{theorem}{Theorem}[section]
\newtheorem{lemma}[theorem]{Lemma}
\newtheorem{proposition}[theorem]{Proposition}
\newtheorem{corollary}[theorem]{Corollary}
\newtheorem{claim}[theorem]{Claim}
\theoremstyle{definition}
\newtheorem{definition}[theorem]{Definition}
\newtheorem{definitions and remarks}[theorem]{Definitions and Remarks}
\newtheorem{examplelemma}[theorem]{Example$\setminus$Lemma}
\theoremstyle{remark}
\newtheorem{remark}[theorem]{Remark}
\numberwithin{equation}{section}
\newcommand{\xmon}[1]{\mathbf{x}^{\boldsymbol{#1}}}
\newcommand{\sm}[1]{{\scriptstyle (#1)}}
\title{Global resolution of singularities in $1$-dimensional foliated spaces}
\author{
        Andr\'e Belotto \\
        University of Toronto\\
        andre.belottodasilva@utoronto.ca
}
\date{2014}
\begin{document}
\maketitle
\section*{Abstract}
Let $M$ be an analytic manifold over $\mathbb{R}$ or $\mathbb{C}$, $\theta$ a $1$-dimensional Log-Canonical (resp. monomial) singular distribution and $\mathcal{I}$ a coherent ideal sheaf defined on $M$. We prove the existence of a resolution of singularities for $\mathcal{I}$ that preserves the Log-Canonicity (resp. monomiality) of the singularities of $\theta$. Furthermore, we apply this result to provide a resolution of a family of ideal sheaves when the dimension of the parameter space is equal to the dimension of the ambient space minus one.
\def\contentsname{Contents}
\tableofcontents 
\thispagestyle{empty}
\newpage
\section{Introduction}
A \textit{foliated ideal sheaf} is a quadruple $(M,\theta, \mathcal{I},E)$, where: $M$ is a smooth analytic manifold of dimension $n$ over a field $\mathbb{K}$ (where $\mathbb{K}$ is $\mathbb{R}$ or $\mathbb{C}$); $\mathcal{I}$ is a coherent and everywhere non-zero ideal sheaf of $M$; $E$ is an ordered collection $E = (E^{(1)},...,E^{(l)})$, where each $E^{(i)}$ is a smooth divisor on $M$ such that $\sum_i E^{(i)}$ is a reduced divisor with simple normal crossings; $\theta$ is an involutive singular distribution defined over $M$ and everywhere tangent to $E$. In the same notation, a \textit{foliated analytic manifold} is the triple $(M,\theta,E)$ and an \textit{idealistic ideal} is the triple $(M,\mathcal{I},E)$.\\
\\
The main objective of this work is to find a resolution of singularities for $\mathcal{I}$ that preserves the class of singularities of $\theta$. In order to be precise and set notation we briefly recall some basic notions of singular distributions and resolution of singularities:
\begin{itemize}
\item \textbf{Singular distributions} (we follow \cite{BaumBott}): Let $Der_M$ denote the sheaf of analytic vector fields over $M$, i.e.,the sheaf of analytic sections of $TM$. An {\em involutive singular distribution} is a coherent sub-sheaf $\theta$ of $Der_M$ such that for each point $p$ in $M$ the stalk $\theta_p:=\theta . \mathcal{O}_p$ is closed under the Lie bracket operation.\\
\\
Consider the quotient sheaf $Q = Der_M/ \theta$.  The {\em singular set} of $\theta$ is defined by the closed analytic subset $S(\theta) = \{p \in M : Q_p \text{ is not a free $\mathcal{O}_p$ module}\}$. A singular distribution $\theta$ is called \textit{regular} if $S(\theta)=\emptyset$. On $M \setminus S(\theta)$ there exists an unique analytic subbundle $L$ of $TM |_{ M\setminus S(\theta)}$ such that $\theta$ is the sheaf of analytic sections of $L$. We assume that the dimension of the $\mathbb{K}$ vector space $L_p$ is the same for all points $p$ in $M \setminus S$ (this always holds if $M$ is connected). This dimension is called the {\em leaf dimension} of $\theta$ and is denoted by $d$. In this case $\theta$ is called an involutive \textit{$d$-singular distribution}.\\
\\
A blowing-up $\sigma: (\widetilde{M},\widetilde{E})\to (M,E)$ is \textit{admissible} if the center $\mathcal{C}$ is a closed and regular sub-manifold of $M$ that has simple normal crossings with $E$ (see pages 137-138 of \cite{Ko} for details). Finally, given an admissible blowing-up:
\[
 \sigma:(\widetilde{M},\widetilde{\theta},\widetilde{E}) \to (M,\theta,E)
\]
we define the singular distribution $\widetilde{\theta}$ as the strict transform of $\theta$ intersected with $Der_{\widetilde{M}}(-\widetilde{E})$.
\item \textbf{Resolution of singularities:}(we follow \cite{Ko}) The \textit{support} of the ideal sheaf $\mathcal{I}$ is the subset $V(\mathcal{I}) := \{p \in M ; \mathcal{I}.\mathcal{O}_p \subset m_p\}$ where $m_p$ is the maximal ideal of the structural ring $\mathcal{O}_p$.\\
\\
Given an admissible blowing-up $\sigma: (\widetilde{M},\widetilde{E})\to (M,E)$ we say that it has \textit{order one} in respect to $\mathcal{I}$ (or to $(M,\theta,\mathcal{I},E)$) if the center $\mathcal{C}$ is contained in the support of $\mathcal{I}$ (see definition 3.65 of \cite{Ko} for details). In this case: the \textit{total transform} of the ideal sheaf $\mathcal{I}$ is the ideal sheaf $\mathcal{I}^{\ast}:=\mathcal{I} .\mathcal{O}_{\widetilde{M}}$; the \textit{controlled transform} of the ideal sheaf $\mathcal{I}$ is the ideal sheaf $\mathcal{I}^c:= \mathcal{O}(F) \mathcal{I}^{\ast}$, where $F$ stands for the exceptional divisor of the blowing-up (see subsection 3.58 of \cite{Ko}). An admissible blowing-up of order one of a foliated ideal sheaf $(M,\theta, \mathcal{I},E)$ is the mapping:
\[
\sigma: (\widetilde{M},\widetilde{\theta}, \widetilde{\mathcal{I}},\widetilde{E}) \longrightarrow (M,\theta,\mathcal{I},E)
\]
where the ideal sheaf $\widetilde{\mathcal{I}}$ is the controlled transform of $\mathcal{I}$. We extend this notion to a sequence of blowings-up in the obvious manner, i.e., a sequence of \text{admissible blowings-up of order one} is a sequence:
\[
 \begin{tikzpicture}
  \matrix (m) [matrix of math nodes,row sep=3em,column sep=3em,minimum width=1em]
  {(M_r,\theta_r,\mathcal{I}_r,E_r) & \cdots & (M_0,\theta_0,\mathcal{I}_0,E_0)\\};
  \path[-stealth]
    (m-1-1) edge node [above] {$\sigma_r$} (m-1-2)
    (m-1-2) edge node [above] {$\sigma_1$} (m-1-3);
\end{tikzpicture}
\]
where each blowing-up $\sigma_{i+1}$ is admissible of order one in respect to $(M_i,\theta_i, \allowbreak \mathcal{I}_i,E_i)$. In particular, in this case the composition $\boldsymbol{\sigma} = \sigma_1 \circ \dots \circ \sigma_r$ is an isomorphism over $M \setminus V(\mathcal{I})$. A \textit{resolution} of an ideal sheaf $\mathcal{I}$ is a sequence of admissible blowings-up of order one
such that $\mathcal{I}_r = \mathcal{O}_{M_r}$. In particular, $\mathcal{I} . \mathcal{O}_{M_r}$ is the ideal sheaf of a SNC divisor on $M_r$ with support contained in $E_r$. We remark that the existence of a resolution of $\mathcal{I}$ is first proved by Hironaka in \cite{Hir}, and more modern proofs can be found e.g. \cite{BM,Ko,Vil4,Wo2}.
\end{itemize}
 Now, we start our study by defining which class of singularities of $\theta$ we want to preserve. For example, if we assume that $\theta$ is a regular singular distribution, we could try to resolve the singularities of $\mathcal{I}$ in such a way that the singular distribution $\theta_r$ is also regular. Unfortunately, it is easy to get examples of foliated ideal sheaves whose resolution necessarily breaks the regularity of a distribution:\\
\\
\textbf{Example: \label{Ex:BrRe}} If $(M,\theta,\mathcal{I},E)=(\mathbb{C}^2,\frac{\partial}{\partial x},(x,y),\emptyset)$ then the only possible strategy for a resolution is to blow-up the origin, which breaks the regularity of the distribution.\\
\\
Since the class of regular foliations is too restrictive for our purposes, we work with two classes of singular foliations which will be stable under suitably chosen blowings-up. More precisely:
\begin{itemize}
 \item Log-Canonical foliations where introduced by Mcquillan (see \cite{Mc}) and correspond to the class of minimal singularities of general singular foliations, i.e.,it is the best kind of foliations one can expect to obtain from reduction of singularities Theorems (see details in section $\ref{ssec:LogCanonical}$);
 \item $\mathcal{R}$-monomial foliations are defined in section \ref{ssec:Mon} and are deeply related with the notion of resolution in families and monomialization of maps (their leaves correspond to the level curves of a monomial map - see Example$\setminus$Lemma \ref{exl:Fi}). Furthermore, it is reasonable to speculate that they are minimal foliations (in the above sense) of totally integrable singular foliations.
\end{itemize}
Our main result provides a global resolution of singularities that preserves the two above classes under the hypothesis that $\theta$ has leaf dimension one. In order to be precise, since we are working in the analytic category, we define the notion of \textit{local foliated ideal sheaves} as quintuples $(M,M_0,\theta,\mathcal{I},E)$ where $M_0$ is an open relatively compact subset of $M$. We can now present the main Theorem of this work:
%
%
%
\begin{theorem}
Let $(M,M_0,\theta,\mathcal{I},E)$ be a local foliated ideal sheaf and suppose that $\theta$ is Log-Canonical (resp. $\mathcal{R}$-monomial) and has leaf dimension equal to $1$, then there exists a sequence of admissible blowings-up of order one:
\[
\begin{tikzpicture}
  \matrix (m) [matrix of math nodes,row sep=3em,column sep=3em,minimum width=2em]
  {
     (M_r,\theta_r,\mathcal{I}_r,E_r) & \cdots & (M_1,\theta_1,\mathcal{I}_1,E_1) & (M_0,\theta_0,\mathcal{I}_0,E_0)\\};
  \path[-stealth]
    (m-1-1) edge node [above] {$\sigma_r$} (m-1-2)
    (m-1-2) edge node [above] {$\sigma_2$} (m-1-3)
    (m-1-3) edge node [above] {$\sigma_1$} (m-1-4);
\end{tikzpicture}
\]
such that $\mathcal{I}_r = \mathcal{O}_{M_r}$ (i.e.,a resolution of $\mathcal{I}$) and $\theta_r$ is Log-Canonical (resp. $\mathcal{R}$-monomial).
\label{th:Hi1simpl}
\end{theorem}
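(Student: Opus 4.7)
The plan is to bootstrap Theorem \ref{th:HironakaSsimpl} via an auxiliary induction exploiting the fact that leaf dimension one allows explicit computation with a single local generator $X$ of $\theta$. Define the $\theta$-\emph{saturation}
\[
\bar{\mathcal{I}} = \sum_{k \geq 0} X^k[\mathcal{I}],
\]
a chain of ideals stabilizing by noetherianity; the resulting coherent sheaf is $\theta$-invariant by construction and contains $\mathcal{I}$. The resolution will be a sequence alternating between two types of steps.

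At each stage, compute $\bar{\mathcal{I}}$ for the current $\mathcal{I}$. If $\bar{\mathcal{I}} \neq \mathcal{O}$, apply one step from the resolution provided by Theorem \ref{th:HironakaSsimpl} to $(M, M_0, \theta, \bar{\mathcal{I}}, E)$, which is $\theta$-invariant; this is a $\theta$-admissible blowing-up preserving $R$-monomiality. If instead $\bar{\mathcal{I}} = \mathcal{O}$ while $\mathcal{I} \neq \mathcal{O}$, the $\theta$-\emph{depth} $k(p) := \min\{k : X^k[\mathcal{I}]_p \ni \mathrm{unit}\}$ is finite and upper semi-continuous, and its maximal locus is a $\theta$-invariant closed analytic subset; choose a $\theta$-admissible center contained in this locus, adapted to the local $R$-monomial form of $X$, and perform the blow-up. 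The adapted strict transform of subsection \ref{sec:AST} produces the new $\theta'$, which may switch between the regular form $\partial/\partial x_i$ and the singular monomial form $\sum \alpha_j x_j \partial/\partial x_j$, but remains $R$-monomial throughout.

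The principal obstacle is termination. A single invariant such as the $\theta$-depth does not suffice: the first type of step can resurrect a non-trivial saturation $\bar{\mathcal{I}'}$ even after $\bar{\mathcal{I}} = \mathcal{O}$, so the depth is not defined at every stage, and a type-two blowup is apt to increase the Hironaka invariant of the saturation. One must construct a composite invariant pairing the standard Hironaka invariant of $\bar{\mathcal{I}}$ with the $\theta$-depth of $\mathcal{I}$ and additional bookkeeping for the exceptional configuration, then verify by explicit chart computations — handling the regular monomial case $X = \partial/\partial x_i$ and the singular case $X = \sum \alpha_j x_j \partial/\partial x_j$ separately — that this composite invariant decreases strictly (in a suitable lexicographic order) under each $\theta$-admissible blow-up of either type, so that the alternating procedure terminates when $\mathcal{I} = \mathcal{O}_{M_r}$.
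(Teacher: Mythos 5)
Your outline reproduces the skeleton of the paper's argument: your saturation $\bar{\mathcal{I}}$ is the stabilized tangency chain $H(\theta,\mathcal{I},\nu)$ of section \ref{sec:CIS}, your two alternating step types correspond to Propositions \ref{prop:Hi1a} (resolve the $\theta$-invariant closure via Theorem \ref{th:HironakaS}) and \ref{prop:Hi1b} (resolve $H(\theta,\mathcal{I},\nu-1)$ by totally transverse centers), and your ``$\theta$-depth'' is the tg-order $\nu_p(\theta,\mathcal{I})$. But the proposal stops exactly where the work begins. Your termination paragraph says one ``must construct a composite invariant \dots and verify by explicit chart computations that [it] decreases strictly''; that construction and verification \emph{is} the proof, and you have not supplied it. The paper makes the pair $(\nu,\mathrm{type})$ decrease lexicographically by establishing two controlled-transform identities: under a $\theta$-invariant blowing-up the whole chain satisfies $H(\theta^{'},\mathcal{I}^{'},i)=H(\theta,\mathcal{I},i)^{\ast}\mathcal{O}(-F)$ (Lemma \ref{lem:25}), so the saturation of the controlled transform equals the controlled transform of the saturation, $\nu$ cannot increase, and fully resolving $\bar{\mathcal{I}}$ forces the type down to $1$; and under totally transverse blowings-up the chain obeys the explicit formula of Claim 2 in the proof of Proposition \ref{prop:Hi1b}, from which one reads off that after resolving $H(\theta,\mathcal{I},\nu-1)$ the chain stabilizes one step earlier, i.e.\ $\nu$ strictly drops. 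Your own worry that a type-two blowing-up ``is apt to increase the Hironaka invariant of the saturation'' is symptomatic: without these identities you have no candidate invariant, and your single-step alternation (rather than running each auxiliary Hironaka resolution to completion before switching phases) leaves it unclear that the sequence you produce is even the resolution of any fixed auxiliary ideal.

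A second, independent gap is the type-one step itself. You say ``choose a $\theta$-admissible center contained in this locus, adapted to the local $R$-monomial form of $X$,'' but you neither exhibit such a global center nor show that blowing it up improves anything. The paper avoids the choice entirely: it takes the functorial Hironaka resolution of $H(\theta,\mathcal{I},\nu-1)$ and proves (Claim 1 in the proof of Proposition \ref{prop:Hi1b}) that its centers are \emph{automatically} totally transverse to $\theta$, because at every point of $V(H(\theta,\mathcal{I},\nu-1))$ that ideal contains an element $g$ with $X(g)$ a unit, so $g$ can be taken as a coordinate transverse to the foliation and every admissible center inside its zero locus is $\theta$-totally transverse. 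Some mechanism of this kind, deriving admissibility of the centers from the structure of the auxiliary ideal rather than postulating it, is needed for your argument to close.
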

\begin{remark}
This Theorem is a Corollary of Theorem  $\ref{th:Hi1}$ where we also prove that this resolution is functorial in respect to a certain class of smooth morphisms called \textit{chain-preserving} smooth morphisms (see section \ref{sec:SMCPM} for a precise definition).
\end{remark}
\begin{remark}
The class of $\mathbb{Z}$-monomial foliations is possibly the smallest class of foliations where we can resolve singularities of $\mathcal{I}$ preserving the class of the foliation (at least for leaf-dimension $1$).
\end{remark}
%
The originality of this result comes from the fact that the searched resolution can not be given by the usual Hironaka's algorithm (this is exemplified in section \ref{sec:Ex}). The proof relies in an invariant that we call tg-order (abbreviation for tangency order - see section \ref{sec:CIS} for the precise definition). This invariant measures the order of tangency between an ideal sheaf $\mathcal{I}$ and a singular distribution $\theta$, even if the objects are singular. In particular, if $\theta$ is equal to $Der_M$, the order of tangency coincides with the usual multiplicity of the ideal sheaf. We then apply Hironaka's algorithm to the maximal tangency-order locus. This guarantees some necessary ``compatibility" conditions between each blowing-up and the transforms of the singular distribution. These ``compatibility" conditions are formalized by the notion of $\theta$\textit{-admissible} centers (see section \ref{sec:Tadm} for more details).
\begin{remark}
Although all proofs and results of this manuscript are set in the analytic category, they are also valid for the algebraic category if one consider etale neighborhoods instead of analytic neighborhoods.
\end{remark}
\subsection{Example}
\label{sec:Ex}
Let us start with a simple example that illustrates the additional difficulties appearing in the problem under study. We consider the Log-Canonical foliated ideal sheaf $(M,\theta,\mathcal{I},E) = (\mathbb{C}^3,\theta,\mathcal{I},\emptyset)$, where $\theta$ is generated by the regular vector-field $\partial=\frac{\partial}{\partial z} + z \frac{\partial}{\partial x}$ and $\mathcal{I}$ is the ideal generated by $(x,y)$. Notice that the admissible blowing-up of order one $\sigma: (\widetilde{M},\widetilde{\theta},\widetilde{\mathcal{I}},\widetilde{E}) \longrightarrow (M,\theta,\mathcal{I},E)$ with center $\mathcal{C} = V(x,y)$ provides a resolution of $\mathcal{I}$. Nevertheless, in the $x$-chart ($x =\widetilde{x}$, $y=\widetilde{y}\widetilde{x}$ and $z=\widetilde{z}\widetilde{x}$), the singular distribution $\widetilde{\theta}$ is generated by:
\[
\widetilde{\partial} = \widetilde{x} \frac{\partial}{\partial \widetilde{z}} + \widetilde{z}( \widetilde{x}\frac{\partial}{\partial \widetilde{x}}- \widetilde{y}\frac{\partial}{\partial \widetilde{y}})
\]
which is not Log-Canonical since the linear part is nilpotent. Thus, in order to preserve the Log-Canonicity, we are forced to blow-up the origin first: $\sigma_1: (M_1,\theta_1,\mathcal{I}_1,E_1) \longrightarrow (M,\theta,\mathcal{I},E)$. In this case, the interesting chart is the $z$-chart ($x =x_1z_1$, $y=y_1z_1$ and $z=z_1$), where we obtain:
\[
\begin{array}{cc}
\mathcal{I}_1=(x_1,y_1) & \partial_1 = z_1 \frac{\partial}{\partial z_1}+(z_1-x_1) \frac{\partial}{\partial x_1}-y_1 \frac{\partial}{\partial y_1}
\end{array}
\]
and the vector-field $\partial_1$ is clearly Log-Canonical. Now, once again a blowing-up with center $(x_1,y_1)$ would break the Log-Canonicity of the singular distribution, so we are forced once again to consider the blowing-up of the origin: $\sigma_2: (M_2,\theta_2,\mathcal{I}_2,E_2) \longrightarrow  (M_1,\theta_1,\mathcal{I}_1,E_1)$. In this case the interesting chart is the $z_1$-chart ($x_1 =x_2z_2$, $y_1=y_1z_1$ and $z_1=z_2$), where we obtain:
\[
\begin{array}{cc}
\mathcal{I}_2=(x_2,y_2) & \partial_2= z_2 \frac{\partial}{\partial z_2} +(1-2x_2) \frac{\partial}{\partial x_2}-2y_2 \frac{\partial}{\partial y_2}
\end{array}
\]
and the vector-field $\partial_2$ is clearly Log-Canonical. Now the reader can easily verify that the blowing-up with center $(x_2,y_2)$ resolves $\mathcal{I}$ and preserve the Log-Canonicity of the singular distribution.
\subsection{Applications and Related Problems}
\label{sec:Apl}
In this section we indicate two research topics that are related to this work. Other related subjects are treated in \cite{BeloT}.  For instance, in chapter 8 of \cite{BeloT}, the main Theorem \ref{th:Hi1simpl} is used to study the existence of slices for certain lie group actions over an analytic and regular variety.\\
\\
\textbf{Application: Resolution in families.}\\
\\
A \textit{smooth family of ideal sheaves} is given by a quadruple $(B,\Lambda, \pi, \mathcal{I})$ where: the ambient space $B$ and the parameter space $\Lambda$ are two smooth analytic manifolds; the morphism $\pi: B \longrightarrow \Lambda$ is smooth; The ideal sheaf $\mathcal{I}$ is coherent and everywhere non-zero over $B$. In the context of this work, a smooth family of ideal sheaves $(B,\Lambda, \pi, \mathcal{I})$ gives rise to a foliated ideal sheaf $(B,\theta,\mathcal{I},\emptyset)$, where $\theta$ is the maximal regular distribution such that $(D \pi) \theta = 0$.\\
\\
Many works have addressed resolution processes for families of ideal sheaves. By this, we intuitively mean a resolution of $\mathcal{I}$ that `preserves' the structure of family. The precise meaning of resolution in families is not well-established in the literature, e.g., two non-equivalent definitions are proposed in \cite{Vil1,Vil2}. In \cite{Vil2} the working definition is based on the notion of quasi-smooth morphism, which is deeply related to the notion of codimension one monomial singular distributions.\\
\\
Based on this observation we propose the following definition  for  resolution in families:\\
\\
\textbf{Uniform Resolution in Families of Ideal sheaves:} An \textit{uniform resolution} of a smooth family of ideal sheaves $(B,\Lambda, \pi, \mathcal{I})$ is an admissible sequence of blowings-up of order one:
\[
\begin{tikzpicture}
  \matrix (m) [matrix of math nodes,row sep=3em,column sep=3em,minimum width=2em]
  {(B_r,\theta_r,\mathcal{I}_r,E_r) & \cdots & (B_1,\theta_1,\mathcal{I}_1,E_1) & (B,\theta,\mathcal{I},\emptyset)\\};
  \path[-stealth]
    (m-1-1) edge node [above] {$\sigma_r$} (m-1-2)
    (m-1-2) edge node [above] {$\sigma_2$} (m-1-3)
    (m-1-3) edge node [above] {$\sigma_1$} (m-1-4);
\end{tikzpicture}
\]
such that $\mathcal{I}_r= \mathcal{O}_{B_r}$ and $\theta_r$ is $\mathbb{Z}$-monomial.\\
\\
We remark that this notion of resolution in families has originally been proposed at $\cite{RZ}$ in the context of smooth families of planar foliations by curves. In particular, it is an essential step in Roussarie's program for the existential part of the Hilbert $16^{th}$ Problem.\\
\\
As a consequence of the results in $\cite{Vil2}$, it is already proved the existence of an uniform resolution in families for the case where $dim \Lambda =1$ (under the additional hypotheses that the morphism $\pi$ is flat over $V(\mathcal{I})$). Furthermore, as a consequence of Theorem $\ref{th:Hi1simpl}$, the following result is immediate:
\begin{corollary}
Given a smooth family of ideal sheaves $(B,\Lambda, \pi, \mathcal{I})$ such that $dim \Lambda = dim B -1$ then, for every relatively compact open set $B_0 \subset B$, there exists an uniform resolution of $(B_0, \Lambda,\pi_0,\mathcal{I}_0)$ where $\pi_0= \pi|_{B_0}$ and $\mathcal{I}_0 = \mathcal{I} . \mathcal{O}_{B_0}$.
\end{corollary}
\begin{flushleft}
\textbf{Related Problem: Monomialization of maps.} 
\end{flushleft}
An analytic map $\Phi: M \longrightarrow N$ is \textit{monomial} if at every point $p$ in $M$, there exists a coordinate system $\mathbf{x}=(x_1,...,x_m)$ over $\mathcal{O}_p$ and $\mathbf{y}=(y_1,...,y_n)$ of $\mathcal{O}_{\Phi(p)}$ such that:
\[
\Phi(x) = (\Phi_1(x),...,\Phi_n(x)) = \left(\mathbf{x}^{\alpha_1}, ... ,\mathbf{x}^{\alpha_n}\right)
\]
where the multi-indexes $\alpha_i = (\alpha_{i,1}, \dots, \alpha_{i,m})$ are all linearly independent, i.e.,$\alpha_1 \wedge \dots \wedge \alpha_n \not\equiv0$. We now consider the following problem (for a precise formulation see $\cite{King,Cut}$): given an analytic map $\Phi: M \longrightarrow N$ such that $d\Phi$ is generically of maximal rank, can we assume, up to a sequence of blowings-up in $M$ and $N$, that the map $\Phi: M \longrightarrow N$ is monomial?\\
\\
The best results, to date, on the algebraic category are given in a series of articles of Cutkosky $\cite{Cut,Cut2,Cut3}$ (where local uniformization for any dimension and global monomialization for maps from three folds to surfaces is proved) and an article of Dan Abramovich, Jan Denef and Kalle Karu $\cite{Abr}$ (where monomialization by modifications is proved). Nevertheless, the problem in all its generality is still not solved and very few results seem to exist for the analytic category.\\
\\
The present work naturally relates with this problem since monomial singular distributions are level curves of monomial maps. We shall explore such a relation in a forthcoming manuscript (which can already be found in arXiv, see \cite{Bel2}) where we study a local monomialization algorithm of first integrals. 
%
%
%
\section{Singular Distributions}
\subsection{Log-Canonical Singular Distributions}
\label{ssec:LogCanonical}
For planar vector-fields, one can use the Theorem of reduction of singularities of Seidenberg to work only with reduced singularities. In general, one can only expect Theorems of reduction of singularities that reduced the problem to the class od \textit{canonical singularities} as introduced by McQuillan (see \cite{Mc}), following the approach of the Mori program.\\
\\
In order to well-define the notion of Log-Canonical singularities, let $M_s$ be the set $M \setminus S(\theta)$ and $j: (M_s,\theta_s) \rightarrow (M,\theta)$ be the injection from $M_s $ to $M$. Since $\theta_s$ is regular over $M_s$, we can define the canonical sheaf $K_{\theta_s} := det(\theta^{\vee})$ over $M_s$, where $\theta^{\vee}$ stands for the dual of $\theta$. We now define $K_{\theta}:= j_{\ast}K_{\theta_s}$.\\
\\
Now, let $\pi : (\widetilde{M},\widetilde{\theta},\widetilde{E}) \to (M,\theta,E)$ be any sequence of blowings-up. We can write:
\[
 K_{\widetilde{\theta}} = \pi^{\ast}K_{\theta} + \sum a(F,M,\theta) F
\]
where $a(F,M,\theta)$ is independent of the morphism $\pi$ and depends only on the discrete valuation that defines $F$ (which is a reduced exceptional divisor). It is called the discrepancy of $(M,\theta)$ at $F$. 
Now, let:
\[
 \epsilon(F) = \left\{ \begin{array}{l}
                       0 \text{ if $F$ is invariant by the singular distribution}\\
                       1 \text{ if $F$ is not invariant by the singular distribution}
                      \end{array}
 \right.
\]
We are now ready to define the notion of Log-Canonical Singularities:
\begin{definition}
The foliated manifold $(M,\theta,E)$ is said to be log-canonical if $a(\widetilde{E}_j,M,\theta) \geq -\epsilon(\widetilde{E}_j) $ for all sequence of blowings-up $\pi:(\widetilde{M},\widetilde{\theta},\widetilde{E})\to (M,\theta,E)$.
\end{definition}
In particular, if the singular distribution has leaf dimension equals to one, the Log-Canonical class coincides with the notion of elementary singularities. More precisely, given a vector-field $\partial$ in $Der_M .\mathcal{O}_p$, one of the following possibilities must be true:
\begin{itemize}
 \item[i]) $\partial$ is regular at $p$, i.e.,there exists a function $f$ in $\mathcal{O}_p$ such that $\partial(f)$ is a unit;
 \item[ii]) Otherwise, $\partial$ is singular at $p$, i.e.,the vector-field $\partial$ leaves the maximal ideal $\mathbf{m}_p$ invariant and gives rise to an endomorphism between the Zariski tangent spaces:
\[
  \bar{\partial} :  \frac{\boldsymbol{m}_p}{\boldsymbol{m}^2_p} \to  \frac{\boldsymbol{m}_p}{\boldsymbol{m}^2_p}
  \]
\end{itemize}
\begin{lemma}(see Fact I.ii.4 in \cite{McP}) A $1$-singular distribution $\theta$ is \textit{log-canonical} if, and only if, at each point $p$ in $M$, the singular distribution $\theta.\mathcal{O}_p$ is generated by a vector-field $\partial_p$ which is either in case $[i]$ (i.e.,it is regular) or it is in case $[ii]$ and $\bar{\partial}$ is a non-nilpotent endomorphism.
\end{lemma}
%
%
%
%
\subsection{Monomial Singular Distribution}
\label{ssec:Mon}
The class of Monomial Singular Distributions is a sub-class of the Log-Canonical Singular Distributions. Its motivation lies in the study of families of ideals or vector-fields and in the study of monomialization of maps. We come back to this discussion after we give its definition:
\begin{definition}
\label{def:mon}
Given a foliated manifold $(M,\theta,E)$, we say that the singular distribution $\theta$ is $R$-\textit{monomial} at a point $p$ if there exists set of generators $\{\partial_1,...,\partial_d\}$ of $\theta . \mathcal{O}_p$ and a coordinate system $\mathbf{x} = (x_1,\ldots,  x_m)$ centered at $p$ such that:
\begin{itemize}
\item[i)] The exceptional divisor $E$ is locally equal to $\{x_1 \cdots x_l=0\}$ for some $l\leq m$;
\item[ii)] The singular distribution $\theta$ is everywhere tangent to $E$, i.e.,$\theta \subset Der_M(-E)$;
\item[iii)] Apart from re-indexing, the vector-fields are of the form:
\[
 \begin{aligned}
  \partial_i &=  \partial x_{i} \text{ or}\\
  \partial_i &= \sum^{m}_{j=1} \alpha_{i,j}x_j \partial x_j \text{ (where }\alpha_{i,j} \in R\text{)}
 \end{aligned}
\]
\item[iv)] If $\omega \subset Der_M(-logE)$ is a $d$-singular distribution such that $\theta \subset \omega$ then $\theta = \omega$.
%
\end{itemize}
In this case, we say that $\mathbf{x}$ is a \textit{monomial coordinate system}, and that $\{\partial_1,\dots,\partial_d\}$ is a \textit{monomial basis}.
\end{definition}
%
The Example$\setminus$Lemma below shows an important feature of $R$-monomial singular distribution:
%
\begin{examplelemma}[Monomial First Integrals]
\label{exl:Fi}
A singular distribution $\theta$ is $R$-monomial if, and only if, for any monomial coordinate system $\mathbf{x} = (x_1,\ldots,  x_m)$ centered at $p$ there exists $m-d$ monomials $(\xmon{\beta_1},\dots,\xmon{\beta_{m-d}})$, where $\boldsymbol{\beta_i} \in R^m$ for all $i$ and $\boldsymbol{\beta_1}\wedge \dots \wedge \boldsymbol{\beta_{m-d}} \neq 0$, such that
 \[
\theta.\mathcal{O}_p =\{\partial \in Der_p(-E);\text{ } \partial(\xmon{\beta_i})\equiv 0 \text{ for all } i\}
\]
\end{examplelemma}
\begin{remark}
The proof of Example$\setminus$Lemma \ref{exl:Fi} can be find in \cite{BeloT}, Lemma $2.2.2$. We do not reproduce its proof because the result is not going to be explicitly used in this manuscript.
\end{remark}
This result shows in more detail the relation between monomial singular distributions and monomialization of maps (see \cite{Cut2} for results in monomialization of maps). Indeed, if a holomorphic map is monomial, the foliation generated by its level curves is $\mathbb{Z}$-monomial. Furthermore, in the study of families, the notion of quasi-smoothness (see \cite{Vil2} for a definition) is closely related to a $\mathbb{Z}$-monomial foliation. We now turn to some important properties of $R$-monomial singular distributions:
\begin{lemma}
If $\theta$ is a $1$-singular distribution which is $R$-monomial at a point $p$ in $M$, then there exists an open neighborhood $U$ of $p$ such that $\theta$ is $R$-monomial at every point $q$ in $U$.
\label{lem:Gmonloc}
\end{lemma}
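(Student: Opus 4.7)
The plan is, for each point $q$ sufficiently close to $p$, to construct new analytic coordinates centered at $q$ together with a new generating set of $\theta_q$ exhibiting the $R$-monomial form.

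I would start by fixing coordinates $x=(x_1,\ldots,x_n)$ centered at $p$ on a neighborhood $U$ of $p$ and an $R$-monomial generating set $\{X_1,\ldots,X_d\}$ of $\theta_p$. After relabeling one may write $X_i=\partial/\partial x_i$ for $i\in I_{reg}\subseteq\{1,\ldots,n\}$, and since $x_j\partial/\partial x_j=x_j\cdot X_j$ lies in the $\mathcal{O}_p$-span of $X_j$ when $j\in I_{reg}$, one may further assume that the monomial generators take the form $X_k=\sum_{j\notin I_{reg}}\alpha_{k,j}\,x_j\partial/\partial x_j$ with $\alpha_{k,j}\in R$. For $q\in U$, set $J=\{j:x_j(q)\neq 0\}$ and introduce new coordinates $y$ centered at $q$ by $y_j=x_j-x_j(q)$ for $j\in I_{reg}$, $y_j=\log(x_j/x_j(q))$ for $j\in J\setminus I_{reg}$ (for a fixed analytic branch well-defined near $q$), and $y_j=x_j$ for $j\in J^c\setminus I_{reg}$. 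Under this change $x_j\partial/\partial x_j=\partial/\partial y_j$ when $j\in J\setminus I_{reg}$ and $x_j\partial/\partial x_j=y_j\partial/\partial y_j$ when $j\in J^c\setminus I_{reg}$, so each monomial generator takes the mixed form
\[
X_k=\sum_{j\in J\setminus I_{reg}}\alpha_{k,j}\,\frac{\partial}{\partial y_j}+\sum_{j\in J^c\setminus I_{reg}}\alpha_{k,j}\,y_j\frac{\partial}{\partial y_j},
\]
while $X_i=\partial/\partial y_i$ for $i\in I_{reg}$.

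Next I would split these mixed generators into pure-Euler and pure-regular pieces. Using $R$-linear row operations on the matrix $(\alpha_{k,j})$, one extracts those combinations whose projection to the columns indexed by $J\setminus I_{reg}$ vanishes; these form an $R$-submodule of pure-Euler generators $\sum_{j\in J^c\setminus I_{reg}}\beta_{\ell,j}\,y_j\partial/\partial y_j$ with $\beta_{\ell,j}\in R$ automatically, already in the $R$-monomial form. For each remaining generator $Y_\ell$, which has both a non-zero constant-coefficient regular part on $J\setminus I_{reg}$ and an Euler tail on $J^c\setminus I_{reg}$, I would apply a flow-box style analytic coordinate change $z_j=y_j\exp(-c_{j,\ell}y_{j_\ell})$ for $j\in J^c\setminus I_{reg}$, with a pivot direction $j_\ell\in J\setminus I_{reg}$ and constants $c_{j,\ell}\in\mathbb{K}$ chosen to cancel the Euler tail, preceded if necessary by a $\mathbb{K}$-linear combination of the $Y_\ell$'s together with a $\mathbb{K}$-linear change of coordinates in the $J\setminus I_{reg}$ directions to bring the regular parts to standard basis vectors. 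The identity $y_j\partial/\partial y_j=z_j\partial/\partial z_j$ guarantees that the pure-Euler generators extracted above remain in Euler form with unchanged $R$-coefficients.

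The main obstacle is keeping all coefficients inside $R$, as $R$ need not be a field and naive Gaussian elimination on $(\alpha_{k,j})$ would introduce coefficients in $\mathbb{K}\setminus R$. This is circumvented by two observations: the pure-Euler submodule is defined as the $R$-saturated kernel of a projection and therefore admits an $R$-basis whose corresponding generators have $R$-coefficients by construction; and the only $\mathbb{K}$-scalars introduced in the flow-box and linear-change steps are the exponents $c_{j,\ell}$ together with a $\mathbb{K}$-linear transition matrix, which appear only in the coordinate transition formulas and never as coefficients of the final generators, again thanks to $y_j\partial/\partial y_j=z_j\partial/\partial z_j$. Hence, at $q$, one obtains a set of $d$ generators each of one of the prescribed $R$-monomial types, establishing openness of $R$-monomiality.
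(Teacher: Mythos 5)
Your argument is correct, and it rests on the same two mechanisms as the paper's proof: a logarithmic/exponential change of coordinates that converts $x_j\partial/\partial x_j$ into $\partial/\partial y_j$ exactly in the directions where $x_j(q)\neq 0$, followed by a flow-box--type exponential change that kills the residual Euler tails of the resulting regular generators, while the Euler components in the directions with $x_j(q)=0$ pass through unchanged. Where you genuinely diverge is in the linear algebra over $R$. The paper first reduces the coefficient matrix $(\alpha_{i,j})$, restricted to the non-vanishing coordinates, to a block form $\left(\begin{smallmatrix}D&B\\0&0\end{smallmatrix}\right)$ with $D$ diagonal \emph{and all entries still in $R$} -- a step it asserts without loss of generality although row reduction over a ring normally requires division -- and it also separates the regular generators into a distinct inductive case. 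You instead treat regular and singular generators uniformly, postpone all linear algebra until after the log change, isolate the pure-Euler part as the $R$-kernel of the projection onto the constant-coefficient columns, and observe that every scalar not forced to lie in $R$ ends up only in the coordinate transition functions, never as a coefficient of a final generator. This is arguably cleaner and repairs the paper's most terse step. Two points in your write-up deserve tightening, though neither is a gap in the idea: (i) the $R$-kernel $K$ need not be a free $R$-module, so ``admits an $R$-basis'' should be replaced by the sufficient statement that one can choose finitely many elements of $K$ forming a $\mathbb{K}$-basis of the $\mathbb{K}$-kernel -- possible because the matrix has entries in $R$, so the $\mathbb{K}$-kernel is spanned by vectors with entries in $\operatorname{Frac}(R)$ whose denominators can be cleared; and (ii) you should say explicitly that the new generating set (the retained mixed generators together with these kernel combinations) is obtained from the old one by a matrix invertible over $\mathbb{K}$, hence over $\mathcal{O}_q$, so it generates the same sheaf near $q$ and still consists of exactly $d$ elements.
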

\begin{remark}
 The above Lemma can be enunciated for any $d$-singular distribution $R$-monomial (see Lemma 2.2.1 of \cite{BeloT}). In this manuscript, we only prove for $1$-singular distribution because we only need this level of generality.
\end{remark}
\begin{proof}
Let $\mathbf{x}$ be a monomial coordinate system defined in an open neighborhood $U$ and $\partial$ a vector-field that generates $\theta.\mathcal{O}_U$. Since $\theta$ is $R$-monomial, either $\partial$ is a regular vector-field and the result is obvious, or
\[
  \partial= \sum^{m}_{j=1} \alpha_{j}x_j \partial x_j \text{ (where }\alpha_{j} \in R\text{)}
\]
Now, let $q = (q_1,\dots, q_m)$ be another point of $U$, and let $\mathbf{y} = (y_1, \dots ,y_m) = (x_1-q_1,\dots, x_m-q_m)$ be a coordinate system centered at $q$. Then:
\[
  \partial . \mathcal{O}_q= \sum^{m}_{j=1} \alpha_{j}(y_j + q_j) \partial y_j
\]
and either this vector-field is regular, or $q_j \alpha_j = 0$ for all $j$. In both cases, it is clear that the singular distribution is monomial at $q$.
\end{proof}
\begin{flushleft}
We end this subsection proving another important property of $R$-monomial singularities:
\end{flushleft}
\begin{lemma}
 Suppose that $\theta$ is a $1$-singular distribution $R$-monomial at a point $p$ in $M$ and that $\mathcal{I}$ is an ideal sheaf $\theta$-invariant, i.e.,such that $\theta[\mathcal{I}]\subset \mathcal{I}$. Let $\partial$ be a monomial vector-field that generates $\theta$ at $p$. Then, there exists a system of generators $\{f_1,\dots,f_n\}$ of $\mathcal{I}.\mathcal{O}_p$ such that $\partial f_i = K_i f_i$, where $K_i$ is a constant in $R$.
%
\label{lem:InvNSing}
\end{lemma}
\begin{remark}
 In particular, this shows that condition $[i]$ of the Definition of monomial singularities $\ref{def:mon}$ can be deduced from the other three properties $[ii-iv]$.
\end{remark}
\begin{proof}
Let us fix a monomial coordinate system $\mathbf{x} = (x_1,\dots,x_m)$ and a system of generators $(f_1, \dots, f_n)$ of $\mathcal{I}.\mathcal{O}_p$. Let us first assume that $\partial$ is a singular vector-field and, thus, $\partial = \sum_i \alpha_i x_i \partial_{x_i}$. So, given any monomial $\xmon{\beta}$, we have that
\[
\partial(\xmon{\beta}) = \sum^m_{i=1} \alpha_i \beta_i \xmon{\beta} = K_{\boldsymbol{\beta}} \xmon{\beta}
\]
For some $K_{\boldsymbol{\beta}} \in R$. Since the number of different monomials in a Taylor expansion is countable, there exists a countable set $R^{'} \subset R$ such that $K_{\boldsymbol{\beta}} \in R^{'}$ for all $\boldsymbol{\beta} \in \mathbb{Z}^{m}$. This allow us to rewrite the Taylor expansion of each generator $f_i$ in the following form:
\[
f_i(\mathbf{x})= \sum_{j \in \mathbb{N}} h_{i,j}(\mathbf{x})
\]
where $\partial(h_{i,j})= K_j h_{i,j}$ with $K_j \in R^{'}$ and $K_j \neq K_k$ whenever $j\neq k$. Furthermore, let us notice that $h_{i,j}(\mathbf{x})$ are convergent Taylor series (because $f_i$ is absolutely convergent in a neighborhood), which implies that $h_{i,j}(\mathbf{x}) \in \mathcal{O}_p$. We claim that all functions $h_{i,j}$ are contained in $\mathcal{I}.\mathcal{O}_p$. Indeed, let us show that $h_{1,0}$ is in the ideal (the proof of the other functions follows an analogous argument). Let $g_0 = f_1$ and
\[
\begin{aligned}
 g_1 &= \frac{1}{K_0-K_1}(\partial(f_1) - K_1 f_1 ) = \frac{1}{K_0-K_1}[ \sum_{j \in \mathbb{N}} K_j h_{1,j}(x) - K_1 \sum_{j \in \mathbb{N}} h_{1,j}(x) ] \\
 &= h_{1,0} + \sum_{j \geq 2} \gamma_{1,j}\sm{1} h_{1,j} \in \mathcal{I}.\mathcal{O}_p
\end{aligned}
\]
where $\gamma_{1,j}\sm{1} = \frac{K_j-K_1}{K_0-K_1}$. We define recursively:
\[
g_n = \frac{1}{K_0-K_n}(\partial(g_{n-1}) - K_n g_{n-1}) = h_{1,0} + \sum_{j \geq n+1} \gamma_{1,j}\sm{n} h_{1,j} \in \mathcal{I}.\mathcal{O}_p
\]
for constants $\gamma_{1,j}\sm{n}$. It is clear that $(g_n)_{n\in\mathbb{N}} \subset \mathcal{I}.\mathcal{O}_p$ converges formally to $h_{1,0}(x)$ (in the Krull topology of $\widehat{\mathcal{O}}_p$). By faithful flatness, this implies that $h_{1,0} \in \mathcal{I}.\mathcal{O}_p$ (see section 6.3 and Theorems 6.3.4 and 6.3.5 of $\cite{Hor}$ for a precise formulation of this result). It is now clear that $\mathcal{I}.\mathcal{O}_p = (h_{i,j})$ and we just need to use Noetherianity to obtain a finite system of generators.\\
\\
If $\partial$ is a regular vector-field then the result easily follows from an adaptation of the above argument. It is worth remarking that, in the later case, $K_i = 0$ for all $i$. 
\end{proof}
\subsection{$\theta$-Admissible Blowings-up}
\label{sec:Tadm}
Let $(M,\theta,E)$ be a $1$-foliated manifold and let $\mathcal{C}$ be an analytic sub-manifold of $M$. Consider the reduced ideal sheaf $\mathcal{I}_{\mathcal{C}}$ that generates $\mathcal{C}$, i.e.,$V(\mathcal{I}_{\mathcal{C}}) = \mathcal{C}$. We say that $\mathcal{C}$ is a $\theta$\textit{-admissible} center if:
\begin{itemize}
\item[$\bullet$] $\mathcal{C}$ is a regular closed sub-variety that has SNC with $E$;
\item[$\bullet$] Either $\mathcal{C}$ is $\theta$-invariant (i.e.,$\theta [\mathcal{I}_{\mathcal{C}}] \subset \mathcal{I}_{\mathcal{C}}$) or it is $\theta$-transverse (i.e.,$\theta[\mathcal{I}_{\mathcal{C}}] = \mathcal{O}_M$).
\end{itemize}
The objective of this definition is to avoid centers with finite tangency to the foliation. For example
\[
 \text{the center }\mathcal{I}_{\mathcal{C}} = (x^2-y,z) \text{ in } (\mathbb{R}^3,\partial_x,\emptyset) \text{ is \textbf{not} $\theta$-admissible.}
\]
In this context, it is clear that a blowing-up $\sigma: (\widetilde{M},\widetilde{\theta},\widetilde{E}) \longrightarrow (M,\theta,E)$ is $\theta$\textit{-admissible} if the center $\mathcal{C}$ is $\theta$-admissible. A \textit{sequence $\vec{\sigma}=(\sigma_1,...,\sigma_r)$ of $\theta$-admissible blowings-up} is a sequence of blowings-up:
\[
\begin{tikzpicture}
  \matrix (m) [matrix of math nodes,row sep=3em,column sep=3em,minimum width=2em]
  {(M_r,\theta_r,E_r) & \cdots & (M_1,\theta_1,E_1) & (M_0,\theta_0,E_0)\\};
  \path[-stealth]
    (m-1-1) edge node [above] {$\sigma_r$} (m-1-2)
    (m-1-2) edge node [above] {$\sigma_2$} (m-1-3)
    (m-1-3) edge node [above] {$\sigma_1$} (m-1-4);
\end{tikzpicture}
\]
such that $\sigma_i : (M_{i+1},\theta_{i+1},E_{i+1}) \to (M_{i},\theta_i,E_i)$ is a $\theta_i$-admissible blowing-up. Analogously, we can define sequence of $\theta$-invariant and $\theta$-transverse blowings-up.\\
\\
The following two Lemmas enlighten the interest of $\theta$-admissible blowings-up for the class of singular distribution which we are interested:
\begin{lemma}
 Let $(M,\theta,E)$ be a $1$-foliated manifold which is Log-Canonical (resp. $R$-monomial) and  $
\sigma: (\widetilde{M},\widetilde{\theta},\widetilde{E}) \longrightarrow (M,\theta,E)
$ be a $\theta$-transverse blowing-up. Then $\widetilde{\theta}$ is Log-Canonical (resp. $R$-monomial) and it is equal to $\mathcal{O}(-F) \sigma^{\ast}\theta$, where $F$ is the exceptional divisor created by $\sigma$.
\label{lem:thetatr}
\end{lemma}
\begin{proof}
 Since $\sigma$ is a $\theta$-transverse blowing-up, we know that $\theta[\mathcal{I}_{\mathcal{C}}] = \mathcal{O}_M$. So, at each point $p$ of $\mathcal{C}$ the singular distribution $\theta.\mathcal{O}_p$ is generated by a regular vector-field $\partial$. By the Flow-box Theorem, there exists a coordinate system $(x,\mathbf{y}) = (x,y_1,\dots,y_{m-1})$ such that $\partial = \partial_x$. Furthermore, there exists a function $f \in \mathcal{I}_{\mathcal{C}}.\mathcal{O}_p$ such that $\partial (f)$ is a unit. Taking $\bar{x}=f$ and $\bar{y}=y$, we obtain a coordinate system $(\bar{x},\bar{\mathbf{y}}) = (\bar{x},\bar{y}_1,\dots,\bar{y}_{m-1})$ where $\partial = \partial_{\bar{x}}$ and $\mathcal{I}_{\mathcal{C}}.\mathcal{O}_p$ is generated by $(\bar{x},\bar{y}_1,\dots,\bar{y}_t)$. Now, from the fact that $\mathcal{C}$ has SNC with $E$ and $\{\bar{x}=0\}$ must be transverse to $E$, apart from another change of coordinates in the $\bar{\mathbf{y}}$ coordinates, we can assume that $E$ is locally equal to $ \{ \Pi \bar{y}_i^{\epsilon_i} =0\} $, where $\epsilon_i\in \{0,1\}$. It is now clear that, after blowing-up, the singular distribution $\widetilde{\theta}$ is Log-Canonical (resp. $R$-monomial).
\end{proof}
\begin{lemma}
 Let $(M,\theta,E)$ be a $1$-foliated manifold which is Log-Canonical (resp. $R$-monomial) and  $
\sigma: (\widetilde{M},\widetilde{\theta},\widetilde{E}) \longrightarrow (M,\theta,E)
$ be a $\theta$-invariant blowing-up. Then $\widetilde{\theta}$ is Log-Canonical (resp. $R$-monomial) and it is equal to $\sigma^{\ast}\theta$.
\label{lem:thetainv}
\end{lemma}
\begin{proof}
 Since $\sigma$ is a $\theta$-invariant blowing-up, we know that $\theta[\mathcal{I}_{\mathcal{C}}] \subset \mathcal{I}_{\mathcal{C}}$. As a first remark, let $\theta^{\ast} = \sigma^{\ast}\theta$ and $F$ be the exceptional divisor introduced by $\sigma$. Then:
\[
 \theta^{\ast} [ \mathcal{O}(-F) ] = \theta^{\ast} [ \mathcal{I}^{\ast} ] \subset \sigma^{\ast}\{\theta [ \mathcal{I} ] \} + \mathcal{I}^{\ast} = \mathcal{I}^{\ast} = \mathcal{O}(-F) 
\]
which implies that $\theta^{\ast}$ is tangent to $F$ and, by consequence, to $\widetilde{E}$. Now, let $p$ be a point in the center $\mathcal{C}$. We study the preimage of $p$ and, to that end, we divide in two cases:
 \medskip

\noindent{\emph{Case 1 - $\theta.\mathcal{O}_p$ is regular:}} In other words, the vector-field $\partial$ that generated $\theta.\mathcal{O}_p$ is a regular vector-field. Let us fix a monomial coordinate system $(x,\mathbf{y})= (x,y_1,\dots,y_{m-1})$ centered at $p$ such that $\partial_x$ generated $\theta . \mathcal{O}_p$. Then, by Lemma \ref{lem:InvNSing}, there exists a set of generators $\{f_1(\mathbf{y}),\dots, f_n(\mathbf{y})\}$ of $\mathcal{I}_{\mathcal{C}}.\mathcal{O}_p$ which is independent of the $x$-coordinate. Finally, since $\mathcal{I}_{\mathcal{C}}$ has SNC with $E$, apart from a change of coordinates in the $\mathbf{y}$ coordinates, we can assume that:
\[
\begin{aligned}
 \theta\phantom{_{\mathcal{C}}} & \text{ is locally generated by } \partial_x \text{, }\\
 \mathcal{I}_{\mathcal{C}} &\text{ is locally generated by } (y_1,\dots,y_t) \text{ for some }t\leq m-1 \text{ and }\\
 E\phantom{_{\mathcal{C}}} & \text{ is locally generated equal to } \{ \Pi y_i^{\epsilon_i} =0\} \text{ where }\epsilon_i\in \{0,1\}
\end{aligned}
\]
 In this case, at every point $q$ in the preimage of $p$ by $\sigma$, it is clear that $\theta^{\ast}.\mathcal{O}_q$ is a regular singular distribution. Since $\theta^{\ast}$ is tangent to $\widetilde{E}$, we conclude that $\widetilde{\theta}.\mathcal{O}_q=\theta^{\ast}.\mathcal{O}_q$ and, thus, it is Log Canonical (resp. $R$-monomial).  
\bigskip

\noindent{\emph{Case 2 - $\theta.\mathcal{O}_p$ is singular:}} In this case, it is convenient to divide in two cases, depending on the singularity class we are considering:
\bigskip

\noindent{\emph{Case 2.1 - $\theta$ is Log Canonical:}} Let us fix a generator $\partial$ of $\theta.\mathcal{O}_p$ and a coordinate system $\mathbf{x} = (x_1,\dots,x_m)$ such that
\[
\begin{aligned}
 \mathcal{I}_{\mathcal{C}} & \text{ is locally generated by } (x_1,\dots,x_t) \text{ for some }t\leq m \text{ and }
\end{aligned}
\]
we have that $\partial = \sum_i A_i \partial_{x_i}$, where $A_i \in \boldsymbol{m}_p$ and $\bar{\partial}$ is non-nilpotent, where we recall that the function $\bar{\partial}$ is given by:
\[
  \bar{\partial} :  \frac{\boldsymbol{m}_p}{\boldsymbol{m}^2_p} \to  \frac{\boldsymbol{m}_p}{\boldsymbol{m}^2_p}
\]
In other words, if we denote the linear part of $A_i$ by $\sum_j a_{i,j} x_i$, then the matrix:
\[
 A = \begin{bmatrix}
      a_{1,1} & \dots & a_{1,m}\\
      \vdots& \ddots & \vdots \\
      a_{m,1}& \dots & a_{m,m}
     \end{bmatrix} = 
     \begin{bmatrix} a_{1,1} & v_1 & v_2\\
      w_1 & B_{1,1} & B_{1,2} \\
      w_2& B_{2,1} & B_{2,2}
      \end{bmatrix}
\]
is non-nilpotent, where $B_{1,1}$ is a $(t-1)\times (t-1)$ matrix. Since the function $A_j$ with $j\leq t$ belongs to $\mathcal{I}_{\mathcal{C}}.\mathcal{O}_p = (x_1,\dots, x_t)$, we conclude that $v_2=0$ and $B_{1,2}=0$.\\
\\
Now, let $q$ be a point in the preimage of $p$ by $\sigma$. Without loss of generality, we can assume that $q$ is the origin of the $x_1$ chart and that:
\[
 \sigma^{\ast}\partial_p = A_1^{\ast}\partial_{\widetilde{x}_1} + \frac{1}{\widetilde{x}_1}\sum^t_{j=2} A_j^{\ast} - A_1^{\ast}\widetilde{x}_i \partial_{\widetilde{x}_i} + \sum_{j>t} A_j \partial_{\widetilde{x}_i}
\]
since by hypothesis the function $A_j$ with $j\leq t$ belongs to $\mathcal{I}_{\mathcal{C}} = (x_1,\dots, x_t)$, we conclude that $\frac{A_j^{\ast}}{\widetilde{x}_1}$ is holomorphic for all $i\leq t$, which implies that $\sigma^{\ast}\partial$ is holomorphic. Furthermore:
\begin{itemize}
\item If $w_1$ is a non-zero $(t-1)\times 1$ matrix, then $\sigma^{\ast}\partial$ is a regular vector-field (because there exists $j$ such that $\frac{A_j^{\ast}}{\widetilde{x}_1}$ is a unit). In this case, it is clear that $\theta^{\ast}.\mathcal{O}_q = \widetilde{\theta}.\mathcal{O}_q$ and that $\widetilde{\theta}$ is Log canonical at $q$;
\item If $w_1 =0 $, then $\sigma^{\ast}\partial$ is singular. In this case, let us notice that:
\[
 A =
     \begin{bmatrix} a_{1,1} & v_1 &0\\
      0 & B_{1,1} & 0\\
      w_2& B_{2,1} & B_{2,2}
      \end{bmatrix}
\]
is a non-nilpotent matrix, which implies that either $[a_{1,1}]$ or $B_{1,1}$ or $B_{2,2}$ is not a nilpotent matrix. Now, the linear part of $\sigma^{\ast}$ is given by the following matrix:
\[
 \widetilde{A}
=     \begin{bmatrix} a_{1,1} & 0 & 0\\
      * & B_{1,1} - a_{1,1}Id & * \\
      w_2& 0 & B_{2,2}
      \end{bmatrix}
\]
where $Id$ is the $(t-1)\times (t-1)$ identity matrix and $*$ stands for a part of the matrix which is not dependent of the matrix $A$ (because it depends on the quadratic terms of $A_j$ for $1<j<t$). In one hand, if $a_{1,1}\neq 0$, then it is clear that $\widetilde{A}$ is not nilpotent and that we can not factor $\widetilde{x}_1$ out of the vector-field $\sigma^{\ast}\partial$ (otherwise the vector-field would not be tangent to $F = \{\widetilde{x}_1=0\}$). At another hand, if $a_{1,1}=0$, then either $B_{1,1}$ or $B_{2,2}$ is non-nilpotent which implies that $\widetilde{A}$ is non-nilpotent. Furthermore, we clearly can not factor $\widetilde{x}_1$ out of $\sigma^{\ast}\partial$. So, in particular,  $\theta^{\ast}.\mathcal{O}_q = \widetilde{\theta}.\mathcal{O}_q$ and $\widetilde{\theta}$ is Log canonical at $q$.
\end{itemize}
\noindent{\emph{Case 2.2 - $\theta$ is $R$-monomial:}} Let us fix a generator $\partial$ of $\theta.\mathcal{O}_p$ and a monomial coordinate system $\mathbf{x} = (x_1,\dots,x_m)$. Notice that, if $\mathcal{C}\subset E^{(i)}$, where $E^{(i)}$ is a irreducible component of the exceptional divisor $E$ locally given by $\{x_i=0\}$, then the function $x_i$ belongs to $\mathcal{I}_{\mathcal{C}}.\mathcal{O}_p$.\\
\\
So, there exists a set of generators $(x_{i_1},\dots,x_{i_r},f_1, \dots f_n)$ that generates $\mathcal{I}_{\mathcal{C}}.\mathcal{O}_p$, where $\mathcal{C} \subset E^{(i_j)}$ for all $j\leq r$ and $f_i$ is a functions independent on the $x_{i_j}$ coordinates. Furthermore, by Lemma $\ref{lem:InvNSing}$, we can assume that $\partial f_i = K_i f_i$, where $K_i$ is a constant in the ring $R$.\\
\\
Now, since $\mathcal{I}_{\mathcal{C}}$ is regular, we can suppose that $f_1$ is regular and, without loss of generality, that  $\frac{\partial}{\partial x_1}f(p) \neq 0$. Furthermore, since $\mathcal{C}$ has SNC with $E$, we can assume that $\{x_1 =0\}$ is not a divisor of $E$. So, let us consider the change of coordinates $\bar{x}_1=f_1$ and $\bar{x}_i=x_i$ otherwise. In the new coordinates, we get:
\[
\partial= K_{1} \bar{x}_1 \frac{\partial}{\partial \bar{x}_1} + \sum^m_{i=2} \alpha_{i} \bar{x}_j \frac{\partial}{\partial \bar{x}_i}
\]
because $\partial(f_1) = K_1 f_1$. So, this coordinate is still monomial, since the divisor $E$ is still locally given by $\{\Pi \bar{x}_i^{\epsilon_i}=0\}$. Repeating this process a finite number of times, we get a coordinate $\mathbf{y} = (y_1,\dots,y_m)$ such that
\[
\begin{aligned}
 \theta\phantom{_{\mathcal{C}}} & \text{ is locally generated by } \sum \gamma_i y_i \partial_{y_i} \text{ where }\gamma_i \in R\text{, }\\
 \mathcal{I}_{\mathcal{C}} &= (y_1,\dots,y_t) \text{ for some }t\leq m \text{ and }\\
 E\phantom{_{\mathcal{C}}} & \text{ is locally generated equal to } \{ \Pi y_i^{\epsilon_i} =0\} \text{ where }\epsilon_i\in \{0,1\}
\end{aligned}
\]
It is now clear that, after blowing-up, the singular distribution $\widetilde{\theta}$ is also $R$-monomial. 
\end{proof}
These two Lemmas, clearly imply the following Proposition:
\begin{proposition}
Let $(M,\theta,E)$ be a $1$-foliated manifold which is Log-Canonical (resp. $R$-monomial) and  $
\sigma: (\widetilde{M},\widetilde{\theta},\widetilde{E}) \longrightarrow (M,\theta,E)
$ be a $\theta$-admissible blowing-up. Then $\widetilde{\theta}$ is Log-Canonical (resp. $R$-monomial).
\label{prop:AdmCenter}
\end{proposition}
\section{Foliated Ideal Sheaf}
\subsection{The tangency chain}
\label{sec:CIS}
A chain of ideal sheaves consists of a sequence $(\mathcal{I}_i)_{i \in \mathbb{N}}$ of ideal sheaves such that $\mathcal{I}_i \subset \mathcal{I}_j$ if $i \leq j$. The \textit{length} of a chain of ideal sheaves at a point $p$ of $M$ is the minimal number $\nu_p \in \mathbb{N}$ such that $\mathcal{I}_i.\mathcal{O}_{p}=\mathcal{I}_{\nu_p}.\mathcal{O}_{p}$ for all $i \geq \nu_p$. We distinguish two cases:
\[
 \begin{array}{l}
  \text{If $\mathcal{I}_{\nu_p}.\mathcal{O}_{p}= \mathcal{O}_{p}$, then the chain is said to be of type $1$ at $p$;}\\
  \text{If $\mathcal{I}_{\nu_p}.\mathcal{O}_{p} \neq \mathcal{O}_{p}$, then the chain is said to be of type $2$ at $p$.}
 \end{array}
\]
Moreover, fixed a chain of ideal sheaf $(\mathcal{I}_n)$, it is not difficult to see that the functions $\nu: M \to \mathbb{N}$ and $type: M \to \{1,2\}$ are upper semi-continuous (where $\nu(p) = $ the length of $(\mathcal{I}_n)$ at $p$ and $type(p) = $ the type of $(\mathcal{I}_n)$ at $p$). So, given a subset $U$ of $M$, the definition of length and type naturally extends to $U$ as follows:
\[
 \begin{array}{l}
  \text{The length of $(\mathcal{I}_n)$ at $U$ is $\nu_U := sup\{ \nu_p ;  p \in U\}$;}\\
  \text{The type of $(\mathcal{I}_n)$ at $U$ is $type_U := sup\{ type_p ;  p \in U\}$.}
 \end{array}
\]
Notice that $\nu_U$ may be infinity but, if $U$ is a relatively compact open subset of $M$, $\nu_U$ is necessarily finite.\\
\\
Now, given a foliated ideal sheaf $(M,\theta,\mathcal{I},E)$, the \textit{tangency chain} of $(M,\theta,\mathcal{I},E)$ is a chain of ideals $\mathcal{T}g(\theta,\mathcal{I})= \{H(\theta,\mathcal{I},i)\}_{ i \in \mathbb{N}}$ defined by:
\[ \left\{
\begin{aligned}
H(\theta,\mathcal{I},0) \phantom{+1} &:= \mathcal{I}  \\
H(\theta,\mathcal{I},i+1) &:= H(\theta, \mathcal{I},i)+ \theta[H(\theta,\mathcal{I},i)]  
\end{aligned}
\right.
\]
At each point $p$ in $M$, the length of this chain is called the \text{tangent order} (or shortly, the \textit{tg-order}) at $p$, and is denoted by $\nu_p(\theta,\mathcal{I})$. The type of the chain is denoted by $type_p(\theta,\mathcal{I})$.
\begin{remark}
Suppose that $\theta$ is generated by a regular vector-field $\partial$ and let $\gamma_p$ be the orbit of $\partial$ passing through a point $p$ of $V(\mathcal{I})$. In this simple case, we can interpret these invariants as follow:
\begin{itemize}
\item[$\bullet$] If the orbit $\gamma_p$ is contained in the variety $V(\mathcal{I})$, then the type of $(\theta,\mathcal{I})$ at $p$ is two;
\item[$\bullet$] If the orbit $\gamma_p$ is \textbf{not} contained in $V(\mathcal{I})$, then the type of $(\theta,\mathcal{I})$ at $p$ is one. Furthermore, the $tg$-order of $(\theta,\mathcal{I})$ is equal to the order of tangency between the orbit $\gamma_p$ and the variety $V(\mathcal{I})$ at $p$.
\end{itemize}
In other words, the type identifies the presence of invariant leaves and the $tg$-order measures the order of tangency between the leaves and the variety $V(\mathcal{I})$.
\end{remark}
\begin{lemma}
Let $\sigma:(\widetilde{M},\widetilde{\theta},\widetilde{E}) \rightarrow (M,\theta,E)$ be a $\theta$-admissible blowing-up over a $1$-foliated Log-Canonical (resp. $R$-monomial) ideal sheaf $(M,\theta,\mathcal{I},E)$, where $F$ is the divisor created by $\sigma$. Then:
\begin{itemize}
\item[$i)$] If the blowing-up is $\theta$-invariant, then $[\theta(\mathcal{I})]^{\ast} +  \mathcal{I}^{\ast} = \widetilde{\theta}(\mathcal{I}^{\ast}) + \mathcal{I}^{\ast}$.
\item[$ii)$] If the blowing-up is $\theta$-transverse, then $[\theta(\mathcal{I})]^{\ast} \mathcal{O}(-F) +  \mathcal{I}^{\ast} = \widetilde{\theta}(\mathcal{I}^{\ast}) + \mathcal{I}^{\ast}$;
\end{itemize}
\label{lem:AlgProp}
\end{lemma}
\begin{proof}
Notice that, since $\sigma^{\ast}: Der_{M} \rightarrow BlDer_{\widetilde{M}}$ is a morphism, it is clear that $[\theta(\mathcal{I})]^{\ast} \subset \theta^{\ast}(\mathcal{I}^{\ast})$, which implies that:
\begin{itemize}
\item If the blowing-up is $\theta$-invariant, by Lemma \ref{lem:thetainv} $\widetilde{\theta} = \theta^{\ast}$ and, thus:
\[
[\theta(\mathcal{I})]^{\ast} +  \mathcal{I}^{\ast} \subset \theta^{\ast}(\mathcal{I}^{\ast}) + \mathcal{I}^{\ast} = \widetilde{\theta}(\mathcal{I}^{\ast}) + \mathcal{I}^{\ast}
\]
\item If the blowing-up is $\theta$-transverse, by Lemma \ref{lem:thetatr} $\widetilde{\theta} = \mathcal{O}(-F) \theta^{\ast}$ and, thus:
\[
 [\theta(\mathcal{I})]^{\ast} \mathcal{O}(-F) +  \mathcal{I}^{\ast} \subset 
[\theta^{\ast}(\mathcal{I}^{\ast})] \mathcal{O}(-F) +  \mathcal{I}^{\ast} =
\widetilde{\theta}(\mathcal{I}^{\ast}) + \mathcal{I}^{\ast}
\]
\end{itemize}
To prove the other side of the inclusion, fix a point $q$ in $\widetilde{M}$ and a function $G$ in $\widetilde{\theta}(\mathcal{I}^{\ast}).\mathcal{O}_q$. We can write $G$ as:
\[
G = A [x^{\epsilon_i} \partial^{\ast}] \left( \sum_i b_{i} g_{i}^{\ast} \right)
\]
where $x$ is a generator of $\mathcal{O}(-F)$; $\partial$ is a derivation that generated $\theta . \mathcal{O}_p$; the coeficient $\epsilon$ is $0$ or $1$; $x^{\epsilon} \partial^{\ast}$ is an holomorphic derivation; the functions $g_{i}$ belongs to $\mathcal{I}$ and $A$ and $b_{i}$ are general germs over $\mathcal{O}_q$. In particular
\[
 G =  A \sum_{i} x^{\epsilon} \partial^{\ast}[b_{i}] g_{i}^{\ast} + M^{\epsilon} \partial^{\ast}[g_{i}^{\ast}] b_{i} 
\]
Since $x^{\epsilon} \partial^{\ast}[b_{i}] g_{i}^{\ast} \in \mathcal{I}^{\ast}$ and $x^{\epsilon} \partial^{\ast}[g_{i}^{\ast}] b_{i} \in \mathcal{O}(-\epsilon F) [\theta(\mathcal{I})]^{\ast}$ we conclude that:
\[
 G \in  \mathcal{I}^{\ast} + \mathcal{O}(-\epsilon F) [\theta(\mathcal{I})]^{\ast}
\]
and, thus $\widetilde{\theta}(\mathcal{I}^{\ast}) \subset \mathcal{I}^{\ast} + \mathcal{O}(-\epsilon F) [\theta(\mathcal{I})]^{\ast}$ We now just need to remark that:
\begin{itemize}
 \item  If the blowing-up is $\theta$-invariant, then $\epsilon=0$ and
\[
 \widetilde{\theta}(\mathcal{I}^{\ast}) + \mathcal{I}^{\ast} \subset [\theta(\mathcal{I})]^{\ast} +  \mathcal{I}^{\ast} 
\]
 \item  If the blowing-up is $\theta$-transverse then $\epsilon =1$ and
 \[
 \widetilde{\theta}(\mathcal{I}^{\ast}) + \mathcal{I}^{\ast} \subset
[\theta(\mathcal{I})]^{\ast} \mathcal{O}(-F) +  \mathcal{I}^{\ast}
\]
\end{itemize}
\end{proof}
\subsection{Chain-preserving smooth morphism}
\label{sec:SMCPM}
Given two $1$-foliated ideal sheaves $(M,\theta,\mathcal{I},E_M)$ and $(N,\omega,\mathcal{J},E_N)$, a smooth morphism $\phi: M \to N$ is said to be \textit{smooth in respect to $(M,\theta,\mathcal{I},E_M)$ and $(N,\omega,\mathcal{J},E_N)$} if $\mathcal{J}.\mathcal{O}_M=\mathcal{I}$ and $\phi^{-1}(E_{N}) = E_{M}$ (see more details in \cite{Ko}). In this case, we abuse notation and denote the morphism as:
\[
\phi: (M,\theta,\mathcal{I},E_M) \to (N,\omega,\mathcal{J},E_N)
\]
Notice that this definition is independent of the singular distributions $\theta$ and $\omega$ (besides the fact that both $\theta$ and $\omega$ have the same leaf dimension). We say that a smooth morphism $\phi: (M,\theta,\mathcal{I},E_M) \longrightarrow (N,\omega,\mathcal{J},E_N)$ is \textit{chain-preserving} if:
\[
\mathcal{T}g(\omega,\mathcal{J}) . \mathcal{O}_M = \mathcal{T}g(\theta,\mathcal{I})
\]
i.e.,$H(\omega,\mathcal{J},i),\mathcal{O}_M = H(\theta,\mathcal{I},i)$ for all $i \in \mathbb{N}$. We also extend the definition to local foliated ideal sheaves in the natural way i.e.,a morphism $\phi:(M,M_0,\theta,\mathcal{I},E_M) \to (N,N_0,\omega,\mathcal{J},E_N)$ is chain-preserving if: 
\[
\phi|_{M_0} : (M_0,\theta.\mathcal{O}_{M_0},\mathcal{I}.\mathcal{O}_{M_0},E_M \cap M_0) \to (N_0,\omega.\mathcal{O}_{N_0},\mathcal{J}.\mathcal{O}_{N_0},E_N \cap N_0)
\]
is chain-preserving.
\subsection{Blow-up Functors}
\label{sec:RLU}
We follow the presentation of \cite{Ko} (more specifically, definition 3.31 of \cite{Ko}) in order to define the notion of a \textit{blow-up functor} $\mathcal{S}$ that has:
\begin{itemize}
\item[$\bullet$] input: The category whose objects are local foliated ideal sheaves $(M,M_0,\theta, \allowbreak \mathcal{I},E)$ and whose morphisms are smooth morphisms;
\item[$\bullet$] output: The category whose objects are admissible blowing-up sequences:
\[
\begin{tikzpicture}
  \matrix (m) [matrix of math nodes,row sep=3em,column sep=3em,minimum width=2em]
  {
     (M_r,\theta_r,\mathcal{I}_r,E_r) & \cdots & (M_1,\theta_1,\mathcal{I}_1,E_1) & (M_0,\theta_0,\mathcal{I}_0,E_0)\\};
  \path[-stealth]
    (m-1-1) edge node [above] {$\sigma_r$} (m-1-2)
    (m-1-2) edge node [above] {$\sigma_2$} (m-1-3)
    (m-1-3) edge node [above] {$\sigma_1$} (m-1-4);
\end{tikzpicture}
\]
with specified admissible centers $\mathcal{C}_i$ and whose morphisms are given by the Cartesian product.
\end{itemize}
\begin{remark}
For such a functor to be well-defined, we accept blowings-up with empty centers (isomorphisms).
\end{remark}
A blow-up functor $\mathcal{S}$ is said to be a \textit{chain-preserving blow-up Functor}, if the morphisms in the input and output are all chain-preserving morphisms.
\subsection{Resolution of singularities}
Let us state the version of Hironaka`s Theorem that we are going to use. We remark that we do not use the standard notation, since we use the notion of foliated ideal sheaf:
\begin{theorem}
(Hironaka) Let $(M,M_0,\theta,\mathcal{I},E)$ be a local foliated ideal sheaf. There exists a sequence of admissible blowings-up of order one of $(M_0,\theta,\mathcal{I},E)$:
\[
\begin{tikzpicture}
  \matrix (m) [matrix of math nodes,row sep=3em,column sep=3em,minimum width=1em]
  {\mathcal{R}(M,M_0,\theta,\mathcal{I},E): (M_r,\theta_r,\mathcal{I}_r,E_r) & \cdots & (M_0,\theta_0,\mathcal{I}_0,E_0)\\};
  \path[-stealth]
    (m-1-1) edge node [above] {$\sigma_r$} (m-1-2)
    (m-1-2) edge node [above] {$\sigma_1$} (m-1-3);
\end{tikzpicture}
\]
such that $\mathcal{I}_r = \mathcal{O}_{M_r}$. Furthermore, $\mathcal{R}$ is a blowing-up functor.
\label{th:Hironaka}
\end{theorem}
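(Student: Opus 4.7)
The statement is essentially the classical Hironaka theorem recast for the category of foliated ideal sheaves. Observe that the singular distribution $\theta$ is not used in any obstruction: the centers of the resolution are chosen based on $(\mathcal{I},E)$ alone, while $\theta$ is simply transported along each blowing-up via the adapted analytic strict transform defined in subsection \ref{sec:AST}. The plan is therefore to reduce to a standard functorial resolution algorithm and then check that the distributional data ride along functorially.

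First, since $M_0$ is relatively compact in $M$, we may restrict attention to $(M_0,\mathcal{I}_0,E_0)$ and apply a classical functorial resolution of singularities for coherent ideal sheaves with a boundary SNC divisor, for instance the algorithms of Bierstone--Milman, Villamayor, or Kollár (Theorem 3.35 of \cite{Ko}). This produces a canonical sequence of admissible blowings-up of order one
\[
(M_r,\mathcal{I}_r,E_r)\xrightarrow{\sigma_r}\cdots\xrightarrow{\sigma_1}(M_0,\mathcal{I}_0,E_0)
\]
with $\mathcal{I}_r=\mathcal{O}_{M_r}$, whose centers $\mathcal{C}_i$ are regular, have SNC with $E_i$, and are contained in $V(\mathcal{I}_i)$. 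The algorithm is functorial for smooth morphisms of the underlying triples $(M,\mathcal{I},E)$.

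Second, at each stage I equip $M_i$ with $\theta_i$, defined inductively as the adapted analytic strict transform $\theta_i=(\theta_{i-1})^{a,ad}$ of subsection \ref{sec:AST}. By Lemma \ref{lem:AST} and the remark following it, each $\theta_i$ is an involutive $d$-singular distribution everywhere tangent to $E_i$, so $(M_i,\theta_i,\mathcal{I}_i,E_i)$ is a foliated ideal sheaf, and each $\sigma_i$ is an admissible blowing-up of order one in the sense of the paper. Hence the sequence is a resolution of the local foliated ideal sheaf $(M,M_0,\theta,\mathcal{I},E)$.

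Finally, I must verify functoriality with respect to smooth morphisms $\phi:(M,\theta,\mathcal{I},E_M)\to(N,\omega,\mathcal{J},E_N)$. By definition, such a $\phi$ is a smooth morphism of ideal sheaves with $\mathcal{J}.\mathcal{O}_M=\mathcal{I}$ and $\phi^{-1}(E_N)=E_M$, so the classical algorithm yields a commuting Cartesian diagram of blowing-up sequences for $(M,\mathcal{I},E_M)$ and $(N,\mathcal{J},E_N)$ with matching centers $\mathcal{C}_i^M=\phi_i^{-1}(\mathcal{C}_i^N)$. The main point to check is that the adapted analytic strict transform commutes with this base change: the pull-back morphism $\sigma^{*}\colon Der_N\to\mathcal{B}l\, Der_{N'}$, the functor $\zeta^{-1}$ and the intersection with $Der(-\log F)$ are all defined by universal operations on sheaves of derivations, and since $\phi$ is smooth the pull-back of derivations and of the exceptional divisor is compatible in the induced diagram. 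This yields $\phi_i^{*}\omega_i=\theta_i$ (up to the natural identification used in the paper), completing the functoriality.

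The only genuine work is therefore step three, namely the compatibility of the adapted analytic strict transform under smooth base change; this is the main obstacle but is essentially formal, reducing to the fact that $\phi$ smooth implies the pull-back of $Der_N(-\log F_N)$ agrees with $Der_M(-\log F_M)$ under the induced blowing-up and that the $\mathcal{O}$-module generated by $\sigma^{*}\theta$ is preserved by flat pull-back. The rest is an application of the known functorial Hironaka theorem.
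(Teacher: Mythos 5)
Your proposal is correct and follows essentially the same route as the paper, which does not prove this theorem from scratch but presents it as an interpretation of the classical functorial resolution theorems of Bierstone--Milman and W\l odarczyk (in Koll\'ar's presentation): the centers depend only on $(\mathcal{I},E)$ and the distribution is merely transported along by the adapted analytic strict transform. Your third step is even slightly more than is needed, since the paper's notion of a resolution functor commuting with smooth morphisms concerns only the blowing-up sequences and their centers (via Cartesian products), and the definition of smooth morphism is explicitly independent of the singular distributions.
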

\begin{remark}
 Notice that there is no claim upon the singular distribution $\theta_r$.
\end{remark}
\begin{remark}
The above Theorem is an interpretation of Theorem 1.3 of $\cite{BM2}$ or Theorems 2.0.3 and 6.0.6 of $\cite{Wo}$ in the following sense:
\begin{itemize}
\item[$\bullet$] Theorem $1.3$ of $\cite{BM2}$ is enunciated in algebraic category. But the paragraph before Theorem $1.1$ of $\cite{BM2}$ justifies the analytic statement;
\item[$\bullet$] In $\cite{BM2}$ and $\cite{Wo}$, the authors work with \textit{marked ideal sheaves}. We specialize their result to marked ideal sheaves with weight one;
\item[$\bullet$] In order to stress the functorial property of the resolution, we have followed Kollor's presentation (see \cite{Ko}, definition 3.31). 
\end{itemize}
\end{remark}
%
%
%
%
\section{Resolution of an invariant ideal sheaf}
\subsection{Statement of the result}
In this section we prove the following result:
\begin{proposition}
\label{th:HironakaS}
Let $(M,M_0,\theta, \mathcal{I},E)$ be a local Log-Canonical (resp. $R$-monomial) $1$-foliated ideal sheaf. Suppose that $\mathcal{I}_0$ is invariant by $\theta_0$, i.e.,$\theta [ \mathcal{I}] . \mathcal{O}_{M_0} \allowbreak \subset \mathcal{I} . \mathcal{O}_{M_0}$. Then, there exists a sequence of $\theta$-invariant blowings-up of order one:
\[
\begin{tikzpicture}
  \matrix (m) [matrix of math nodes,row sep=3em,column sep=3em,minimum width=1em]
  {\mathcal{R}_{inv}(M,M_0,\theta,\mathcal{I},E): (M_r,\theta_r,\mathcal{I}_r,E_r) & \cdots & (M_0,\theta_0,\mathcal{I}_0,E_0)\\};
  \path[-stealth]
    (m-1-1) edge node [above] {$\sigma_r$} (m-1-2)
    (m-1-2) edge node [above] {$\sigma_1$} (m-1-3);
\end{tikzpicture}
\]
such that:
\begin{itemize}
\item[i]) The ideal sheaf $\mathcal{I}_r$ is equal to the structural ring $\mathcal{O}_{M_r}$. In particular, the pull-back of $\mathcal{I}$ is principal ideal sheaf with support contained in $E_r$;
\item[ii]) The singular distribution $\theta_r$ is Log-Canonical (resp. $R$-monomial);
\item[iii]) $\mathcal{R}_{inv}$ is a chain-preserving blow-up functor (i.e.,it commutes with chain-preserving smooth morphisms).
\end{itemize}
\end{proposition}
This result is a consequence of the Functoriality of Hironaka`s Theorem (see, e.g \cite{BM2}). In what follows we present a rigorous proof.
%
%
%
%
%
%
%
%
%
\subsection{Proof of Proposition \ref{th:HironakaS}}
In order to prove the Proposition, we introduce the notion of geometrical invariance: an ideal sheaf $\mathcal{I}$ is \textit{geometrically invariant} by $\theta$ if every leaf of the foliation generated by $\theta$ that intersects $V(\mathcal{I})$ is totally contained in $V(\mathcal{I})$.
\begin{lemma}
Let $\theta$ be an involutive $1$-singular distribution and $\mathcal{I}$ a reduced ideal sheaf. Then $\mathcal{I}$ is $\theta$-invariant if, and only if, $\mathcal{I}$ is geometrically invariant by $\theta$.
\label{lem:Fol}
\end{lemma}
\begin{proof}
To prove the if implication, let $p$ be a point in $V(\mathcal{I})$, $\mathcal{L}$ be the leaf of $\theta$ passing through $p$ and $\partial$ be a vector-field that generates $\theta.\mathcal{O}_p$. If $\mathcal{L}$ is zero-dimension, it is clear that $\mathcal{L} \subset V(\mathcal{I})$, so we may assume that $\mathcal{L}$ is one-dimensional. In this case, there exists a coordinate system $(x,\mathbf{y}) = (x,y_1,\dots,y_{m-1})$ such that $\partial = \partial_x$. By Lemma \ref{lem:InvNSing}, there exists a system of generators of $\mathcal{I}$ which is independent of the $x$ coordinate and the result is now obvious.\\
\\
To prove the only if implication, let us assume that $\mathcal{I}$ is a reduced ideal sheaf which is geometrically invariant by $\theta$. We claim that $V(\mathcal{I}) \subset V(\theta(\mathcal{I}))$, which is enough to prove the Lemma since it would imply that:
\[
\theta(\mathcal{I}) \subset \sqrt{\theta(\mathcal{I})} \subset \sqrt{\mathcal{I}} = \mathcal{I}
\]
In order to prove the claim, let $p$ be a point of $ V(\mathcal{I})$, $\mathcal{L}$ be the leaf of $\theta$ passing through $p$, $f$ be an arbitrary function in $\mathcal{I}$ and $\partial$ be a vector-field that generates $\theta.\mathcal{O}_p$. Since, by hypothesis, $\mathcal{L} \subset V(\mathcal{I})$, we conclude that $f|_{\mathcal{L}} \equiv 0$. Moreover, since $\partial$ is tangent to $\mathcal{L}$, we conclude that $\partial(f)|_{\mathcal{L}} = \partial|_{\mathcal{L}}(f|_{\mathcal{L}}) =0$ and, in particular, $p \in V(\partial(f))$. Since the choice of $f \in \mathcal{I}$ was arbitrary, we conclude that $p$ belongs to $\theta(\mathcal{I})$ as we wanted to prove.
\end{proof}
We now state a preliminary Lemma:
\begin{lemma}
Let $(M,\theta,\mathcal{I},E)$ be a Log-Canonical (resp. $R$-monomial) foliated ideal sheaf and let us consider a $\theta$-invariant blowing-up of order one $\sigma: (\widetilde{M},\widetilde{\theta},\widetilde{\mathcal{I}},\widetilde{E}) \longrightarrow (M,\theta,\mathcal{I},E)$. Then $\widetilde{\mathcal{I}}$ is invariant by $\widetilde{\theta}$.
\label{lem:Hir1}
\end{lemma}
\begin{proof}
Since $\mathcal{C}$ is a regular sub-manifold geometrically invariant by $\theta$, by Lemma $\ref{lem:Fol}$ the ideal sheaf $\mathcal{I}_{\mathcal{C}}$ is $\theta$-invariant. Furthermore, by Lemma \ref{lem:AlgProp} we have that:
\[
\begin{array}{ccc}
\theta [ \mathcal{I}_{\mathcal{C}}] \subset \mathcal{I}_{\mathcal{C}} & \Rightarrow & \theta^{\ast}[\mathcal{O}(-F)] \subset \mathcal{O}(-F)
\end{array}
\]
Now, by Lemma $\ref{lem:thetainv}$, $\widetilde{\theta}=\theta^{\ast}$. Thus, again by Lemma \ref{lem:AlgProp}:
\[
\begin{aligned}
\widetilde{\theta}[\widetilde{\mathcal{I}}] + \widetilde{\mathcal{I}} &= \theta^{\ast}[\mathcal{I}^{\ast} . \mathcal{O}(F)] + \widetilde{\mathcal{I}} \\
&\subset \theta^{\ast}[ \mathcal{I}^{\ast}] \mathcal{O}(F) + \mathcal{I}^{\ast} \theta^{\ast}[\mathcal{O}(F)] + \widetilde{\mathcal{I}} = \widetilde{\mathcal{I}}
\end{aligned}
\]
which concludes the Lemma.
\end{proof}
Now, we are ready to start the proof of Proposition \ref{th:HironakaS}.
\begin{proof}
By the Hironaka's Theorem $\ref{th:Hironaka}$, there exists a resolution of singularities $\vec{\sigma}=(\sigma_1,...,\sigma_r)$ of $(M,M_0,\theta,\mathcal{I},E)$:
\[
\begin{tikzpicture}
  \matrix (m) [matrix of math nodes,row sep=3em,column sep=3em,minimum width=1em]
  {\mathcal{R}(M,M_0,\theta,\mathcal{I},E): (M_r,\theta_r,\mathcal{I}_r,E_r) & \cdots & (M_0,\theta_0,\mathcal{I}_0,E_0)\\};
  \path[-stealth]
    (m-1-1) edge node [above] {$\sigma_r$} (m-1-2)
    (m-1-2) edge node [above] {$\sigma_1$} (m-1-3);
\end{tikzpicture}
\]
where $\sigma_i : (M_i,\theta_i,\mathcal{I}_i,E_i) \longrightarrow (M_{i-1},\theta_{i-1},\mathcal{I}_{i-1},E_{i-1})$ has center $\mathcal{C}_i$.
\begin{claim}
\label{cl:inv1}
 The sequence of blowings-up $\vec{\sigma}=(\sigma_1,...,\sigma_r)$ is $\theta$-invariant.
\end{claim}
\begin{proof}
Suppose by induction that the centers $\mathcal{C}_i$ are $\theta_{i-1}$-invariant for $i<k$. We need to verify that $\mathcal{C}_k$ is also $\theta_{k-1}$-invariant (including for $k=1$). Since $\mathcal{C}_k$ is regular, by Lemma $\ref{lem:Fol}$, we only need to verify that $\mathcal{C}_k$ is geometrically invariant by $\theta_{k-1}$.\\
\\
To this end, let $\mathcal{L}$ be a connected leaf of $\theta_{k-1}$ with non-empty intersection with $\mathcal{C}_{k}$. If $\mathcal{L}$ is zero-dimensional, it is clear that  $\mathcal{L} \subset \mathcal{C}_{k}$, so let us assume that $\mathcal{L}$ is one-dimensional. Let $p$ be a point in the intersection $\mathcal{C}_k \cap \mathcal{L}$ and $\partial$ be a generator of $\theta_{k-1}.\mathcal{O}_p$. Since $\partial$ is a regular vector-field, there exists a coordinate system $(x,\mathbf{y}) = (x,y_1,...,y_{m-1})$ centered at $p$ such that $\partial = \partial_x$. Now, notice that $\mathcal{I}_{k-1}$ is $\theta_{k-1}$-invariant because of the induction hypotheses and a recursive use Lemma \ref{lem:Hir1}. So, by Lemma \ref{lem:InvNSing}, there exists a set of generators $\{f_1(\mathbf{y}),...,f_n(\mathbf{y})\}$ of $\mathcal{I}_{k-1}. \mathcal{O}_p$ which is independent of the coordinate $x$. It is now clear that the functorial statement of Hironaka's Theorem $\ref{th:Hironaka}$ guarantees that the center $\mathcal{C}_k$ is locally geometrically invariant by $\partial$. Since the choice of $p$ in the intersection $\mathcal{C}_k \cap \mathcal{L}$ was arbitrarily, we conclude that $\mathcal{L} \subset \mathcal{C}_k$, which ends the proof.
\end{proof}
Now, the functoriality statement of Theorem $\ref{th:HironakaS}$ is a direct consequence of the functoriality of Theorem $\ref{th:Hironaka}$ and the Log-Canonicity (resp. $R$-monomiality) statement is a direct consequence of Proposition $\ref{prop:AdmCenter}$. This concludes the proof of Proposition \ref{th:HironakaS}.
\end{proof}
\section{Resolution of singularities subordinated to a $1$-foliation}
\subsection{Statement of the result}
In this section we prove our main Theorem:
\begin{theorem}
Let $(M,M_0,\theta, \mathcal{I},E)$ be a local Log-Canonical (resp. $R$-monomial) $1$-foliated ideal sheaf. Then, there exists a sequence of $\theta$-admissible blowings-up of order one:
\[
\begin{tikzpicture}
  \matrix (m) [matrix of math nodes,row sep=3em,column sep=3em,minimum width=1em]
  {\mathcal{R}_1(M,M_0,\theta,\mathcal{I},E): (M_r,\theta_r,\mathcal{I}_r,E_r) & \cdots & (M_0,\theta_0,\mathcal{I}_0,E_0)\\};
  \path[-stealth]
    (m-1-1) edge node [above] {$\sigma_r$} (m-1-2)
    (m-1-2) edge node [above] {$\sigma_1$} (m-1-3);
\end{tikzpicture}
\]
such that:
\begin{itemize}
\item[i]) The ideal sheaf $\mathcal{I}_r$ is equal to the structural ring $\mathcal{O}_{M_r}$. In particular, the pull-back of $\mathcal{I}$ is principal ideal sheaf with support contained in $E_r$;
\item[ii]) The singular distribution $\theta_r$ is Log-Canonical (resp. $R$-monomial);
\item[iii]) $\mathcal{R}_1$ is a chain preserving blow-up functor (i.e.,it commutes with chain-preserving smooth morphisms).
\end{itemize}
\label{th:Hi1}
\end{theorem}
We remark that the sequence of blowings-up in Theorem \ref{th:Hi1} is \textbf{different} from the one given by Hironaka`s Theorem.
\subsection{Proof of Theorem \ref{th:Hi1}}
Let us start giving the intuitive idea of the proof. Given a local $1$-foliated ideal sheaf $(M,M_0,\theta,\mathcal{I},E)$ the main invariant we consider is the pair:
\[
(\nu,t):=(\nu_{M_0}(\theta,\mathcal{I}),type_{M_0}(\theta,\mathcal{I}))
\]
where we recall that the tg-order $\nu_{M_0}(\theta,\mathcal{I})$ stands for the length of the tangency chain $\mathcal{T}g(\theta,\mathcal{I})$ over $M_0$ and the $type_{M_0}(\theta,\mathcal{I})$ stands for the type of this chain at $M_0$ (see section \ref{sec:CIS}). The proof of the Theorem relies on making this pair of invariant dropp (in relation to their lexicographical order). This is obtained through two steps:
\begin{proposition}
Let $(M,M_0,\theta, \mathcal{I},E)$ be a local Log-Canonical (resp. $R$-monomial) $1$-foliated ideal sheaf and suppose that $type_{M_0}(\theta,\mathcal{I})=2$. Then, there exists a sequence of $\theta$-invariant admissible blowings-up of order one:
\[
\begin{tikzpicture}
  \matrix (m) [matrix of math nodes,row sep=3em,column sep=3em,minimum width=1em]
  {\mathcal{S}_1(M,M_0,\theta,\mathcal{I},E): (M_r,\theta_r,\mathcal{I}_r,E_r) & \cdots & (M_0,\theta_0,\mathcal{I}_0,E_0)\\};
  \path[-stealth]
    (m-1-1) edge node [above] {$\sigma_r$} (m-1-2)
    (m-1-2) edge node [above] {$\sigma_1$} (m-1-3);
\end{tikzpicture}
\]
such that:
\begin{itemize}
\item[i]) $\nu_{M_r}(\theta_r,\mathcal{I}_r) \leq \nu_{M_0}(\theta,\mathcal{I})$ and $type_{M_r}(\theta_r,\mathcal{I}_r)=1$;
\item[ii]) The singular distribution $\theta_r$ is Log-Canonical (resp. $R$-monomial);
\item[iii]) $\mathcal{S}_1$ is a chain-preserving blow-up functor (i.e.,it commutes with chain-preserving smooth morphisms).
\end{itemize}
\label{prop:Hi1a}
\end{proposition}
\begin{proposition}
Let $(M,M_0,\theta, \mathcal{I},E)$ be a local Log-Canonical (resp. $R$-monomial) $1$-foliated ideal sheaf and suppose that $type_{M_0}(\theta,\mathcal{I})=1$. Then, there exists a sequence of $\theta$-transverse admissible blowings-up of order one:
\[
\begin{tikzpicture}
  \matrix (m) [matrix of math nodes,row sep=3em,column sep=3em,minimum width=1em]
  {\mathcal{S}_2(M,M_0,\theta,\mathcal{I},E): (M_r,\theta_r,\mathcal{I}_r,E_r) & \cdots & (M_0,\theta_0,\mathcal{I}_0,E_0)\\};
  \path[-stealth]
    (m-1-1) edge node [above] {$\sigma_r$} (m-1-2)
    (m-1-2) edge node [above] {$\sigma_1$} (m-1-3);
\end{tikzpicture}
\]
such that:
\begin{itemize}
\item[i]) $\nu_{M_r}(\theta_r,\mathcal{I}_r) < \nu_{M_0}(\theta,\mathcal{I})$;
\item[ii]) The singular distribution $\theta_r$ is Log-Canonical (resp. $R$-monomial);
\item[iii]) $\mathcal{S}_2$ is a chain-preserving blow-up functor (i.e.,it commutes with chain-preserving smooth morphisms).
\end{itemize}
\label{prop:Hi1b} 
\end{proposition}
%
 These two Propositions are proved in the next two subsections. We note that the proof of the Theorem trivially follows from these Propositions:
%
\begin{proof}
(Theorem \ref{th:Hi1}): Indeed, by a recursive use of Proposition \ref{prop:Hi1a} and \ref{prop:Hi1b}, we obtain a sequence of $\theta$-admissible blowings-up
\[
\begin{tikzpicture}
  \matrix (m) [matrix of math nodes,row sep=3em,column sep=3em,minimum width=1em]
  { (M_r,\theta_r,\mathcal{I}_r,E_r) & \cdots & (M_0,\theta_0,\mathcal{I}_0,E_0)\\};
  \path[-stealth]
    (m-1-1) edge node [above] {$\sigma_r$} (m-1-2)
    (m-1-2) edge node [above] {$\sigma_1$} (m-1-3);
\end{tikzpicture}
\]
such that $\nu(\theta_r,\mathcal{I}_r) = 0$ and $type(\theta_r,\mathcal{I}_r)=1$. In other words, this implies that $\mathcal{I}_r = \mathcal{O}_{M_r}$. Moreover, conditions $[ii]$ and $[iii]$ of the Propositions, imply conditions  $[ii]$ and $[iii]$ of the Theorem $\ref{th:Hi1}$.
\end{proof}
\subsection{Proof of Proposition \ref{prop:Hi1a}}
Consider a $1$-foliated Log-Canonical (resp. $R$-monomial) ideal sheaf $(M,M_0,\theta, \allowbreak \mathcal{I},E)$ such that $type_{M_0}(\theta,\mathcal{I}) = 2$. Let $\nu := \nu_{M_0}(\theta,\mathcal{I})$ and $\mathcal{C}l:=H(\theta,\mathcal{I},\nu)$ (see section \ref{sec:CIS} for the definition of $H(\theta,\mathcal{I},i)$). By Theorem $\ref{th:HironakaS}$, there exists a $\theta$-invariant resolution $\vec{\sigma} = (\sigma_1,...,\sigma_r)$ of $(M,M_0,\theta,\mathcal{C}l,E)$:
\[
\begin{tikzpicture}
  \matrix (m) [matrix of math nodes,row sep=3em,column sep=3em,minimum width=2em]
  {(M_r,\theta_r,\mathcal{C}l_r,E_r) & \cdots & (M_1,\theta_1,\mathcal{C}l_1,E_1) & (M_0,\theta_0,\mathcal{C}l_0,E_0)\\};
  \path[-stealth]
    (m-1-1) edge node [above] {$\sigma_r$} (m-1-2)
    (m-1-2) edge node [above] {$\sigma_2$} (m-1-3)
    (m-1-3) edge node [above] {$\sigma_1$} (m-1-4);
\end{tikzpicture}
\]
\begin{claim}
 The sequence of blowings-up $\vec{\sigma}$ is of order one in relation to $(M_0,\theta_0, \allowbreak \mathcal{I}_0,E_0)$. Furthermore, $\mathcal{C}l_{j} = H(\theta_j,\mathcal{I}_j,\nu)$ for all $j  \leq r$.
 \label{cl:1a}
\end{claim}
\begin{flushleft}
The main step for proving the claim is the following Lemma:
\end{flushleft}
\begin{lemma}
Let $\sigma: (\widetilde{M},\widetilde{\theta},\widetilde{\mathcal{I}},\widetilde{E}) \longrightarrow (M,\theta,\mathcal{I},E)$ be a $\theta$-invariant blowing-up with center contained in $V(\mathcal{C}l)$, where $\mathcal{C}l:=H(\theta,\mathcal{I},\nu)$. Then
\[
H(\widetilde{\theta},\widetilde{\mathcal{I}}, i) = H(\theta,\mathcal{I}, i)^{\ast}  \mathcal{O}(F) 
\]
for every $i  \leq \nu$, where $F$ is the exceptional divisor created by $\sigma$. In particular $\widetilde{\mathcal{C}l} = H(\widetilde{\theta},\widetilde{\mathcal{I}},\nu)$.
\label{lem:25}
\end{lemma}
\begin{proof}
First, notice that, since $H(\theta,\mathcal{I}, i) \subset \mathcal{C}l$ for $i \leq \nu$, the center of blowing-up is also contained in $H(\theta,\mathcal{I},i)$ for all $i\leq \nu$. Furthermore, the center $\mathcal{C}$ is $\theta$-invariant and, by Lemma \ref{lem:thetainv}, the singular distribution $\widetilde{\theta}$ coincides with the total transform $\theta^{\ast}$. Thus, if $\mathcal{J}$ is a coherent ideal sheaf, by Lemma \ref{lem:AlgProp}
\[
\widetilde{\theta} [\mathcal{O}(-F)] \subset \mathcal{O}(-F) \Rightarrow \mathcal{J} \widetilde{\theta}[\mathcal{O}(F)] \subset  \mathcal{J} \mathcal{O}(F)
\]
In particular, this implies that:
\[
\widetilde{\theta}[\mathcal{J}\mathcal{O}(F)] + \mathcal{J}\mathcal{O}(F) = \mathcal{O}(F)(\widetilde{\theta}[\mathcal{J}] +\mathcal{J})
\]
Now, it rests to prove that the following equality:
\[
H(\widetilde{\theta},\widetilde{\mathcal{I}}, i) =  H(\theta,\mathcal{I}, i)^{\ast} \mathcal{O}(F)
\]
is valid for all $i \leq \nu$. Indeed, suppose by induction that the equality is valid for $i < k$ (notice that for $k=0$, the equality is trivial). Since the center of blowing-up is contained in $V(H(\theta,\mathcal{I}))$, by Lemma \ref{lem:AlgProp} we have that:
\[
\begin{aligned}
H(\widetilde{\theta},\widetilde{\mathcal{I}}, k) &= H(\widetilde{\theta},\widetilde{\mathcal{I}}, k-1) + \widetilde{\theta}[H(\widetilde{\theta}, \widetilde{\mathcal{I}},k-1)]\\
&=H(\widetilde{\theta},\widetilde{\mathcal{I}}, k-1) + \theta^{\ast}[ H(\theta,\mathcal{I}, k-1)^{\ast} \mathcal{O}( F)] \\
&=\mathcal{O}(F) \{ H(\theta,\mathcal{I}, k-1)^{\ast} +\theta^{\ast}[H(\theta,\mathcal{I}, k-1)^{\ast}] \} \\
&=\mathcal{O}(F) \{ H(\theta,\mathcal{I}, k-1) +\theta[H(\theta,\mathcal{I}, k-1)] \}^{\ast} \\
&=\mathcal{O}(F) H(\theta,\mathcal{I}, k)^{\ast}
\end{aligned}
\]
which proves the equality and the Lemma.
\end{proof}
We now turn to the Proof of the Claim \ref{cl:1a}:
\begin{proof}[Proof of Claim \ref{cl:1a}]
Suppose by induction that for $i<k$, the sequence of blowings-up $(\sigma_i , ... , \sigma_1)$ is of order one in respect to $(M_0,\theta_0,\mathcal{I}_0,E_0)$ and that $\mathcal{C}l_{i} = H(\theta_i,\mathcal{I}_i,\nu)$. Let us prove the result $i=k$ (including $k=1$). Since the center of $\sigma_k$ is contained in $V(\mathcal{C}l_{k-1})$, by the induction hypotheses it is also contained in $V(\mathcal{I}_{k-1})$, which implies that the  sequence of blowings-up $(\sigma_k , ... , \sigma_1)$ is of order one in respect to $(M_0,\theta_0,\mathcal{I}_0,E_0)$. Furthermore, by Lemma $\ref{lem:25}$ and the induction hypotheses:
\[
H(\theta_k,\mathcal{I}_k,\nu) = H(\theta_{k-1},\mathcal{I}_{k-1},\nu)^{\ast}.\mathcal{O}(F) = [\mathcal{C}l_{k-1}]^{\ast}.\mathcal{O}(F) = \mathcal{C}l_k
\]
which finishes the proof.
\end{proof}
So, if we take the same sequence of blowing-up $\vec{\sigma}$, but in respect to $(M,M_0,\theta, \allowbreak \mathcal{I},E)$, we obtain a $\theta$-invariant sequence of blowings-up of order one:
\[
\begin{tikzpicture}
  \matrix (m) [matrix of math nodes,row sep=3em,column sep=3em,minimum width=2em]
  {(M_r,\theta_r,\mathcal{I}_r,E_r) & \cdots & (M_1,\theta_1,\mathcal{I}_1,E_1) & (M_0,\theta_0,\mathcal{I}_0,E_0)\\};
  \path[-stealth]
    (m-1-1) edge node [above] {$\sigma_r$} (m-1-2)
    (m-1-2) edge node [above] {$\sigma_2$} (m-1-3)
    (m-1-3) edge node [above] {$\sigma_1$} (m-1-4);
\end{tikzpicture}
\]
such that $ H(\theta_r,\mathcal{I}_r,\nu) = \mathcal{C}l_r = \mathcal{O}_{M_r}$, which implies that $\nu_{M_r}(\theta_r,\mathcal{I}_r) \leq  \nu_{M_0}(\theta,\mathcal{I})$ and $type_{M_r}(\theta_r,\mathcal{I}_r)=1$.\\
\\
Now, since all blowings-up are $\theta$-invariant, by Proposition \ref{prop:AdmCenter}, the singular distribution $\theta_r$ is Log-Canonical (resp. $R$-monomial). So, it only rests to prove the Functoriality Statement $[iii]$.\\
\\
To this end, let $\phi: (M,M_0,\allowbreak \theta,\mathcal{I},E_M) \longrightarrow (N,N_0,\omega,\mathcal{J},E_N)$ be a chain-preserving smooth morphism and $\vec{\sigma} = (\sigma_1,...,\sigma_r)$ and $\vec{\tau} = (\tau_1,...,\tau_r)$ be the sequences of blowings-up given in the above algorithm applied to $(M,M_0,\allowbreak \theta,\mathcal{I},E_M)$ and $ (N,N_0,\omega,\mathcal{J}, \allowbreak E_N)$ respectively (the length of the sequence may be chosen to be the same up to isomorphisms). Since $\phi$ is Chain-Preserving, we have that
\[
 H(\theta_0,\mathcal{I}_0,\nu) =  H(\omega_0,\mathcal{J}_0,\nu).\mathcal{O}_{M_0}
\]
Now, since $\vec{\sigma}$ is the sequence of blowing-up given by Theorem $\ref{th:HironakaS}$ that resolves $H(\theta_0,\mathcal{I}_0,\nu)$ and $ \vec{\tau}$ is the sequence of blowing-up given by Theorem $\ref{th:HironakaS}$ that resolves $ H(\omega_0,\mathcal{J}_0,\nu)$, by the functoriality of Theorem $\ref{th:HironakaS}$ the two sequences of blowings-up $\vec{\sigma}$ and $ \vec{\tau}$ commute in respect to smooth morphisms. In particular, for any ideal sheaf $\mathcal{K}$ over $N_{i-1}$:
\[
(\sigma_i)^{\ast}(\mathcal{K} . \mathcal{O}_{M_{i-1}}) = ( \tau_i^{\ast}\mathcal{K}).\mathcal{O}_{M_i}
\]
So, if $F_{M,i}$ is the exceptional divisor of the blowing-up $\sigma_i:M_{i} \longrightarrow M_{i-1}$ and $F_{N,i}$ is the exceptional divisor of the blowing-up $\tau_i:N_{i} \longrightarrow N_{i-1}$, we have that:
\[
\mathcal{O}(-F_{N,i}) . \mathcal{O}_{M_i} = \mathcal{O}(-F_{M,i})
\]
Furthermore the equality $H(\mathcal{J}_i,\omega_i,j) . \mathcal{O}_{M_i} = H(\mathcal{I}_i,\theta_i,j)$ holds for $i \leq r$ and $j \leq \nu$. Indeed, we suppose by induction that $H(\omega_i,\mathcal{J}_i,j) . \mathcal{O}_{M_i} = H(\theta_i,\mathcal{I}_i,j)$ for $i <k$ and any $j \in \mathbb{N}$. Then:
\[
\begin{aligned}
 H(\omega_k,\mathcal{J}_k,j) . \mathcal{O}_{M_k} &= \left[\mathcal{O}(F_{N,k}) \tau_k^{\ast}H(\omega_{k-1},\mathcal{J}_{k-1},j)\right]. \mathcal{O}_{M_k}\\
&= \mathcal{O}(  F_{M,k}) \sigma^{\ast}_{k}H(\theta_{k-1},\mathcal{I}_{k-1},j)\\
&= H(\theta_k,\mathcal{I}_k,j)
\end{aligned}
\]
for any $j \in \mathbb{N}$, which implies that the two sequences of blowings-up $\vec{\sigma}$ and $\vec{\tau}$ commute by chain-preserving smooth morphisms. This finishes the proof.
\subsection{Proof of Proposition \ref{prop:Hi1b}}
Consider a $1$-foliated ideal sheaf $(M,M_0,\theta,\mathcal{I},E)$ such that $type_{M_0}(\theta,\mathcal{I}) = 1$. Let $\nu := \nu_{M_0}(\theta,\mathcal{I})$ and $\mathcal{M}tg:=H(\theta,\mathcal{I},\nu-1)$. By Theorem $\ref{th:Hironaka}$, there exists a resolution $\vec{\sigma} = (\sigma_1,...,\sigma_r)$ of $(M,M_0,\theta,\mathcal{M}tg(\mathcal{I}),E)$:
\[
\begin{tikzpicture}
  \matrix (m) [matrix of math nodes,row sep=3em,column sep=3em,minimum width=2em]
  {(M_r,\theta_r,\mathcal{M}tg_r,E_r) & \cdots & (M_0,\theta_0,\mathcal{M}tg_0,E_0)\\};
  \path[-stealth]
    (m-1-1) edge node [above] {$\sigma_r$} (m-1-2)
    (m-1-2) edge node [above] {$\sigma_1$} (m-1-3);
\end{tikzpicture}
\]
\begin{claim}
 The sequence of blowings-up $\vec{\sigma}$ is $\theta$-transverse.
 \label{cl:1b}
\end{claim}
\begin{proof}
Suppose by strong induction that, for $i < k$:
\begin{itemize}
\item[a]) The sequence $(\sigma_1,...,\sigma_i)$ of blowing-up is $\theta$-transverse;
\item[b]) For any point $p$ in $ V( \mathcal{M}tg_i )$, there exists a coherent coordinate system $(x,\mathbf{y}) = (x,y_1,...,y_{m-1})$ centered at $p$ such that $x \in \mathcal{M}tg_i . \mathcal{O}_p$ and $\partial x$ generates $\theta_{i} . \mathcal{O}_p$.
\end{itemize}
We prove the result for $k$:\\
\\
\textbf{Base Step}: We start proving the result for $k=0$. In this case, since $type_{M_0}(\theta,\mathcal{I}) = 1$, we know that $\theta(\mathcal{M}tg_0) = \mathcal{O}_{M_0}$, which implies that $\theta.\mathcal{O}_p$ is generated by a regular vector-field  $\partial$. By the flow-box Theorem, there exists a coordinate system $(x,\mathbf{y}) = (x,y_1,...,y_{m-1})$ where $ \partial = \partial_x$. Furthermore, there exists a function $g \in \mathcal{M}tg_0$ such that $\partial_x(g)$ is a unit. Taking $\bar{x}=g$ and $\bar{\mathbf{y}}=\mathbf{y}$, we conclude the result. \\
\\
\textbf{Induction Step:} Let us prove for $k>0$. Fix a point $q$ in $ V( \mathcal{M}tg_{k-1} )$. Since the center $\mathcal{C}_k$ of the blowing-up $\sigma_k: M_k \to M_{k-1} $ is contained in $ V( \mathcal{M}tg_{k-1} )$, by the induction hypotheses $[b]$ it is also totally transverse to $\theta$ at $q$. Since the choice of $q$ was arbitrary, the sequence of blowings-up $(\sigma_1,...,\sigma_k)$ is $\theta$-transverse.\\
\\
Consider now a point $p$ in $V(\mathcal{M}tg_{k})$ and let $q=\sigma_k(p)$. If $\sigma_k$ is a local isomorphism over $p$, the condition $[b]$ is trivially satisfied at $p$. So, let us assume that $p \in F_k$. By the induction hypotheses $[b]$, there exists a coherent coordinate system $(x,\mathbf{y}) = (x,y_1,...,y_{m-1})$ of $\mathcal{O}_q$ such that $x \in \mathcal{M}tg_{k-1} . \mathcal{O}_q$ and $\partial x$ generates $\theta_{k-1}.\mathcal{O}_q$. Since $\mathcal{C} \subset V(\mathcal{M}tg_{k-1})$, without loss of generality $\mathcal{I}_{\mathcal{C}}.\mathcal{O}_p = (x,y_1,...,y_t)$ and $q$ is the origin of the $y_1$-chart. It is now easy to compute the transforms of the blowing-up at $q$ and see that the induction hypotheses $[b]$ is valid at $p$.
\end{proof}
\begin{flushleft}
 By claim $\ref{cl:1b}$ and Lemma $\ref{lem:thetatr}$, we deduce that:
\end{flushleft}
\begin{equation}
\theta_{k+1} =  \mathcal{O}(-F_k) \sigma_k^{\ast}(\theta_k)
\label{eq:te1}
\end{equation}
and, since the center is totally transverse:
\begin{equation}
\theta_{k+1}[\mathcal{O}(-F_k)] \subset  \mathcal{O}(-F_k)
\label{eq:te2}
\end{equation}
In order to simplify the notation, let us define $(i \sigma_k) = \sigma_{i+1}\circ ...  \circ \sigma_{k}$ for $i <  k$, $(k \sigma_k) = id$ and $\boldsymbol{\sigma_k} = \sigma_1 \circ ... \circ \sigma_k$. We also introduce:
\[
\mathcal{K}_k(\alpha) = \prod^{k-1}_{i=1} [(i\sigma_{k-1})^{\ast}\mathcal{O}( \alpha F_i)]
\]
Using the equation $\eqref{eq:te1}$ recursively, we get that:
\begin{equation}
\theta_k= \mathcal{K}_k(-1) \boldsymbol{\sigma_k}^{\ast} \theta
\label{eq:1a}
\end{equation}
Using the equation $\eqref{eq:te2}$ recursively, we get that:
\begin{equation}
\theta_k(\mathcal{K}_k(\alpha)) \subset \mathcal{K}_k(\alpha)
\label{eq:1b}
\end{equation}
Furthermore, given an ideal sheaf $\mathcal{J}$, equation $\eqref{eq:1b}$ implies that:
\begin{equation}
\theta_k[\mathcal{K}_k(\alpha) \mathcal{J}] + \mathcal{K}_k(\alpha) \mathcal{J} = \mathcal{K}_k(\alpha)( \mathcal{J} +\theta_k [\mathcal{J}])
\label{eq:2}
\end{equation} 
\begin{claim}
 The sequence of blowing up $\vec{\sigma}$ is of order one in respect to $(M_0,\theta_0, \allowbreak \mathcal{I}_0,E_0)$. Furthermore, for all $j \leq \nu$:
\begin{align}
H(\theta_r,\mathcal{I}_r,j) &= \mathcal{K}_r(1) . \sum^j_{i=0} \mathcal{K}_r(-i) \boldsymbol{\sigma}^{\ast} H(\theta_0,\mathcal{I}_0,i)
\label{eq:3}
\end{align}
\label{cl:2b}
\end{claim}
\begin{proof}
Suppose by strong induction that, for $k<k_0$:
\begin{itemize}
\item[a]) The sequence of blowing up $(\sigma_k,...,\sigma_1)$ is of order one in respect to $(M_0,\theta_0,\mathcal{I}_0,E_0)$;
\item[b]) Equation $\eqref{eq:3}$ is valid for $k<k_0$ instead of $r$.
\end{itemize}
We prove the result for $k$. But notice that the base step $k_0=0$ is trivial, so we only need to consider the inductive step:\\
\\
\textbf{Induction Step:} Using the induction hypotheses $[b]$ we deduce that:
\[
\begin{aligned}
H(\theta_{k_0-1},\mathcal{I}_{k_0-1},j) &= \mathcal{K}_{k_0-1}(1) . \sum^{j}_{i=0} \mathcal{K}_{k_0-1}(-i)   \boldsymbol{\sigma_{k_0-1}}^{\ast} H(\theta_0,\mathcal{I}_0,i) \\
&\subset \mathcal{K}_{k_0-1}(1). \sum^{j}_{i=0}   \boldsymbol{\sigma_{k_0-1}}^{\ast} H(\theta_0,\mathcal{I}_0,i) \\
&= \mathcal{K}_{k_0-1}(1) \boldsymbol{\sigma_{k_0-1}}^{\ast} H(\theta_0,\mathcal{I}_0,j)
\end{aligned}
\]
In particular $H(\theta_{k_0-1},\mathcal{I}_{k_0-1},\nu-1) \subset \mathcal{M}tg_{k_0-1}$, which implies that $\mathcal{C}_{k_0} \subset V(H(\theta_{k_0-1},\mathcal{I}_{k_0-1},\nu-1))$. So the sequence of blowing up $(\sigma_{k_0},...,\sigma_1)$ is of order one in respect to $(M_0,\theta_0,\mathcal{I}_0,E_0)$.\\
\\
We now need to verify the induction hypotheses $[b]$ for $k=k_0$. We do this by induction on $j \leq \nu$. Indeed, the hypothesis $[b]$ is clearly true for $j=0$, so we can assume by strong induction that it is also true for $j<j_0$. By equation $\eqref{eq:2}$:
\[
\begin{aligned}
H(\theta_{k_0},\mathcal{I}_{k_0},j_0&) = H(\theta_{k_0},\mathcal{I}_{k_0},j_0-1) + \theta_{k_0}[ H(\theta_{k_0},\mathcal{I}_{k_0},j_0-1)]\\
&=H(\theta_{k_0},\mathcal{I}_{k_0},j_0-1) + \theta_{k_0}[ \mathcal{K}_{k_0}(1) . \sum^{j_0-1}_{i=0} \mathcal{K}_{k_0}(-i)  \boldsymbol{\sigma_{k_0}}^{\ast} H(\theta_0,\mathcal{I}_0,i)]\\
&=H(\theta_{k_0},\mathcal{I}_{k_0},j_0-1) + \mathcal{K}_{k_0}(1) . \sum^{j_0-1}_{i=0} \mathcal{K}_{k_0}(-i) \theta_{k_0}[ \boldsymbol{\sigma}_{k_0}^{\ast} H(\theta_0,\mathcal{I}_0,i)]
\end{aligned}
\]
Now, using equation $(\ref{eq:1a})$ and Lemma \ref{lem:AlgProp}, we can continue the deduction of $H(\theta_{k_0},\mathcal{I}_{k_0},j_0)$:
\[
\begin{aligned}
&= \mathcal{K}_{k_0}(1) \sum^{j_0-1}_{i=0} \mathcal{K}_{k_0}(-i)  \left[ \boldsymbol{\sigma_{k_0}}^{\ast} H(\theta_0,\mathcal{I}_0,i) +   \mathcal{K}_{k_0}(-1)          \boldsymbol{\sigma_{k_0}}^{\ast}\theta_{0}[ \boldsymbol{\sigma_{k_0}}^{\ast} H(\theta_0,\mathcal{I}_0,i) \right]\\
&=\mathcal{K}_{k_0}(1) \sum^{j_0-1}_{i=0} \mathcal{K}_{k_0}(-i)  \left[ \boldsymbol{\sigma_{k_0}}^{\ast} H(\theta_0,\mathcal{I}_0,i) +   \mathcal{K}_{k_0}(-1)          \boldsymbol{\sigma_{k_0}}^{\ast} \{ \theta_{0}[ H(\theta_0,\mathcal{I}_0,i) \right]\}]\\
&=\mathcal{K}_{k_0}(1) \sum^{j_0-1}_{i=0} \mathcal{K}_{k_0}(-i)  \left[ \boldsymbol{\sigma_{k_0}}^{\ast} H(\theta_0,\mathcal{I}_0,i) +   \mathcal{K}_{k_0}(-1) \boldsymbol{\sigma_{k_0}}^{\ast} H(\theta_0,\mathcal{I}_0,i+1)\right] \\
&=\mathcal{K}_{k_0}(1). \sum^{j_0}_{i=0} \mathcal{K}_{k_0}(-i) \boldsymbol{\sigma_{k_0}}^{\ast} H(\theta_0,\mathcal{I}_0,i)
\end{aligned}
\]
So the formula is proved.
\end{proof}
So, if we take the same sequence of blowing-up $\vec{\sigma}$, but in respect to $(M,M_0,\theta,\mathcal{I},E)$, we obtain a $\theta$-transverse sequence of blowings-up of order one:
\[
\begin{tikzpicture}
  \matrix (m) [matrix of math nodes,row sep=3em,column sep=3em,minimum width=2em]
  {(M_r,\theta_r,\mathcal{I}_r,E_r) & \cdots & (M_1,\theta_1,\mathcal{I}_1,E_1) & (M_0,\theta_0,\mathcal{I}_0,E_0)\\};
  \path[-stealth]
    (m-1-1) edge node [above] {$\sigma_r$} (m-1-2)
    (m-1-2) edge node [above] {$\sigma_2$} (m-1-3)
    (m-1-3) edge node [above] {$\sigma_1$} (m-1-4);
\end{tikzpicture}
\]
By Claim \ref{cl:2b} and the fact that $\mathcal{M}tg_r =  \mathcal{O}_{M_{r}}$, we conclude that $\boldsymbol{\sigma}^{\ast} H(\theta_0,\mathcal{I}_0,\nu-1) = \mathcal{K}_r(1)$. So:
\[
\begin{aligned}
H(\theta_r,\mathcal{I}_r,\nu-1) &= \mathcal{K}_r(1) . \sum^{\nu-1}_{i=0} \mathcal{K}_r(-i) \boldsymbol{\sigma}^{\ast} H(\theta_0,\mathcal{I}_0,i)\\
&= \mathcal{K}_r(1) . \sum^{\nu-2}_{i=0} \mathcal{K}_r(-i)  \boldsymbol{\sigma}^{\ast} H(\theta_0,\mathcal{I}_0,i) + \mathcal{K}_r(- (\nu-2))\\
&= H(\theta_r,\mathcal{I}_r,\nu-2) + \mathcal{K}_r(- (\nu-2))
\end{aligned}
\]
which implies that:
\[
\begin{aligned}
H(\theta_r,\mathcal{I}_r,\nu) &= H(\theta_r,\mathcal{I}_r,\nu-1) + \theta_r[\mathcal{K}_r(- (\nu-2))] \\
&\subset H(\theta_r,\mathcal{I}_r,\nu-1) + \mathcal{K}_r(- (\nu-2))\\
&= H(\theta_r,\mathcal{I}_r,\nu-1)
\end{aligned}
\]
which proves that $\nu_{M_r}(\theta_r,\mathcal{I}_r)$ is strictly smaller then $\nu$. Now, since all blowings-up are $\theta$-transverse, by Proposition \ref{prop:AdmCenter}, the singular distribution $\theta_r$ is Log-Canonical (resp. $R$-monomial). So, it only rests to prove the Functoriality Statement $[iii]$.\\
\\
To this end, let $\phi: (M,M_0,\allowbreak \theta,\mathcal{I},E_M) \longrightarrow (N,N_0,\omega,\mathcal{J},E_N)$ be a chain-preserving smooth morphism and $\vec{\sigma} = (\sigma_1,...,\sigma_r)$ and $\vec{\tau} = (\tau_1,...,\tau_r)$ be the sequences of blowings-up given in the above algorithm applied $(M,M_0,\allowbreak \theta,\mathcal{I},E_M)$ and $ (N,N_0,\omega,\mathcal{J}, \allowbreak E_N)$ respectively (the length of the sequence may be chosen to be the same up to isomorphisms). Since $\phi$ is Chain-Preserving, we have that
\[
 H(\theta_0,\mathcal{I}_0,\nu-1) =  H(\omega_0,\mathcal{J}_0,\nu-1).\mathcal{O}_{M_0}
\]
Now, since $\vec{\sigma}$ is the sequence of blowing-up given by Theorem $\ref{th:HironakaS}$ that resolves $H(\theta_0,\mathcal{I}_0,\nu-1)$ and $ \vec{\tau}$ is the sequence of blowing-up given by Theorem $\ref{th:HironakaS}$ that resolves $ H(\omega_0,\mathcal{J}_0,\nu-1)$, by the functoriality of Theorem $\ref{th:HironakaS}$ the two sequences of blowings-up $\vec{\sigma}$ and $ \vec{\tau}$ commute by smooth morphisms. In particular, for any ideal sheaf $\mathcal{K}$ over $N_{i-1}$:
\[
(\sigma_i)^{\ast}(\mathcal{K} . \mathcal{O}_{M_{i-1}}) = ( \tau_i^{\ast}\mathcal{K}).\mathcal{O}_{M_i}
\]
So, if $F_{M,i}$ is the exceptional divisor of the blowing-up $\sigma_i:M_{i} \longrightarrow M_{i-1}$ and $F_{N,i}$ is the exceptional divisor of the blowing-up $\tau_i:N_{i} \longrightarrow N_{i-1}$, we have that:
\[
\mathcal{O}(-F_{N,i}) . \mathcal{O}_{M_i} = \mathcal{O}(-F_{M,i})
\]
Furthermore, define $\mathcal{K}_{M,k}(-\alpha)$ and $\mathcal{K}_{N,k}(-\alpha)$ in the obvious way. We have that:
\[
\mathcal{K}_{N,i}(-\alpha) . \mathcal{O}_{M_i} = \mathcal{K}_{M,i}(-\alpha)
\]
Now, the equality $H(\mathcal{J}_i,\omega_i,j) . \mathcal{O}_{M_i} = H(\mathcal{I}_i,\theta_i,j)$ holds for $i \leq r$ and $j \leq \nu$. Indeed, we suppose by induction that $H(\omega_i,\mathcal{J}_i,j) . \mathcal{O}_{M_i} = H(\theta_i,\mathcal{I}_i,j)$ for $i <k_0$ and any $j \in \mathbb{N}$. Then:
\[
\begin{aligned}
H(\omega_{k_0},\mathcal{J}_{k_0},j) . \mathcal{O}_{M_{k_0}} &= ( \mathcal{K}_{N,k_0}(1) \sum^j_{i=0} \mathcal{K}_{N,k_0}(-i) \boldsymbol{\tau_{k_0}}^{\ast}H(\omega_0,\mathcal{J}_0,i) ) . \mathcal{O}_{M_{k_0}}\\
&=\mathcal{K}_{M,k_0}(1) \sum^j_{i=0} \mathcal{K}_{M,k_0}(-i) \boldsymbol{\sigma_{k_0}}^{\ast}H(\theta_0,\mathcal{I}_0,i) \\
&= H(\theta_{k_0},\mathcal{I}_{k_0},j)
\end{aligned}
\]
for any $j \in \mathbb{N}$, which implies that the two sequences of blowings-up $\vec{\sigma}$ and $\vec{\tau}$ commute by chain-preserving smooth morphisms. This finishes the proof of the Proposition.
\section*{Acknowledgments}
I would like to express my gratitude to my adviser, Professor Daniel Panazzolo, for the useful discussions, suggestions and revision of the manuscript. This work owns a great deal to his influence. I would also like to express my gratitude to Professor Vincent Grandjean, Orlando Villamayor and Santiago Encinas for the useful discussions on the subject and to Nico Meffe for his generous help with the english grammar. This work was funded by the Universit\'{e} de Haute Alsace.
\bibliographystyle{alpha}

\end{document}